\documentclass[11pt,a4paper]{article}
\pdfoutput=1
\usepackage[utf8]{inputenc}
\usepackage[T1]{fontenc}
\usepackage{amsmath,amssymb,amsthm}
\usepackage{geometry}
\usepackage{hyperref}
\usepackage{enumitem}

\newtheorem{theorem}{Theorem}[section]
\newtheorem{lemma}[theorem]{Lemma}
\newtheorem{condition}{Condition}

\title{Parametric Statistical Inference in the Zone of Moderate Deviation Probabilities}
\author{M.\,S.~Ermakov}
\date{16 August 2026}

\begin{document}
\maketitle

\begin{abstract}
We study parametric statistical inference in the zone of moderate deviation probabilities. Using the Hellinger-distance technique for the expansion of the log-likelihood ratio, we extend classical results on normal approximation of maximum-likelihood estimators, Bayesian estimators, and posterior concentration to the moderate-deviation regime under substantially weaker conditions than those previously required.
\end{abstract}

\section{Introduction}

Small probability values constantly appear in statistics. Significance levels in confidence estimation and error probabilities in hypothesis testing take small values. The usual approach to the study of such problems consists in treating them as problems concerning probabilities of large and moderate deviations. The question arises: ``under what conditions can results on the normal approximation of the distributions of statistics be extended to the zone of moderate deviation probabilities?''

For maximum-likelihood estimators and likelihood-ratio tests such results have been obtained in a number of papers under rather stringent conditions~\cite{1,2,8,11,14,17} compared with those under which statements on their asymptotic normality were proved~\cite{10,12,13,18}. For Bayesian estimators only concentration inequalities were obtained~\cite{10}. The logarithmic asymptotics corresponding to these inequalities~\cite{10} is optimal in order.

For the study of the asymptotics of moderate-deviation probabilities of maximum-likelihood estimators and likelihood-ratio statistics the Taylor expansion of the log-likelihood ratio based on its derivatives was previously used~\cite{1,2,8,11,14,17}. In the proof of local asymptotic normality of the likelihood ratio, along with this technique one also employs the Taylor expansion of the log-likelihood ratio based on the Hellinger distance~\cite{9,10,13,18}. We use the latter technique for the study of statistical inference problems in the zone of moderate deviation probabilities. We succeed in extending to the moderate-deviation zone the principal results obtained by this approach for the normal approximation. We prove a theorem on the local uniform normal approximation of the log-likelihood ratio in the moderate-deviation zone, and we prove large-deviation principles in the moderate-deviation zone for maximum-likelihood estimators and Bayesian estimators. We also prove a large-deviation principle in the moderate-deviation zone for the concentration of the posterior Bayesian measure. The conditions of the theorems turn out to be much weaker than the conditions under which theorems on moderate-deviation probabilities of maximum-likelihood estimators and likelihood-ratio statistics were previously proved~\cite{1,2,8,11,14,17}.

The results mentioned above are stated in Section~\ref{sec:results}. Their proofs are given in the subsequent sections.

We use the letters $c$, $C$ and $c$, $C$ with indices to denote positive constants. We write $\mathbf{1}(A)$ for the indicator of an event $A$. For any event $A$ we denote by $\overline{A}$ its complement. For any two sequences of positive numbers $a_n$ and $b_n$, the relation $a_n=o(b_n)$ means that $a_n/b_n\to 0$ as $n\to\infty$. For any vector $u$ we denote by $u^T$ the transposed vector and by $|u|$ the Euclidean length of $u$.

\section{Problem Setting}
\label{sec:setting}

Let $X_1,\dots,X_n$ be a sample of independent identically distributed random variables having probability measure $P_\theta$, $\theta\in\Theta$, defined on a $\sigma$-algebra $\mathcal{B}$ of a set $S$. The set $\Theta$ is an open bounded set in $\mathbb{R}^d$. The value of the parameter $\theta$ is unknown.

Assume that the probability measures $P_\theta$, $\theta\in\Theta$, have a common support equal to the set $S$ and are absolutely continuous with respect to a measure $\nu$ defined on the same $\sigma$-algebra $\mathcal{B}$. Denote
\begin{equation}
f(x,\theta)=\frac{dP_\theta}{d\nu}(x),\qquad x\in S,
\label{eq:density}
\end{equation}
the density of the probability measure $P_\theta$ with respect to $\nu$.

We shall be interested in statistical inference about a parameter $\theta_0\in\Theta$.

For all $\theta_0$, $\theta_0+\tau\in\Theta$ define the function
\begin{equation}
g(x,\tau)=g(x,\theta_0,\theta_0+\tau)=\Bigl(\frac{f(x,\theta_0+\tau)}{f(x,\theta_0)}\Bigr)^{1/2}-1,
\label{eq:g}
\end{equation}
where $x\in S$.

We say that the statistical experiment $\mathcal{E}=\{(S,\mathcal{B}),P_\theta,\theta\in\Theta\}$ has finite Fisher information at the point $\theta_0\in\Theta$ if there exist a vector-valued function $\phi=\phi_{\theta_0}:S\to\mathbb{R}^d$ and an increasing function $\omega(|\tau|)=\omega_{\theta_0}(|\tau|)\to 0$ as $\tau\to 0$ such that
\begin{equation}
\int_S\bigl(g(x,\tau)-\tau^T\phi(x)\bigr)^2\,dP_{\theta_0}<|\tau|^2\omega(|\tau|).
\label{eq:FI}
\end{equation}
Define the Fisher information matrix
\begin{equation}
I(\theta_0)=4\int_S\phi\phi^T\,dP_{\theta_0}.
\label{eq:I}
\end{equation}
We shall assume that the rank of the matrix $I(\theta)$ is equal to $d$ at every point $\theta\in\Theta\subset\mathbb{R}^d$.

We introduce the following condition, which is essentially a condition of identifiability of the parameter value (see condition~(3.2), Chapter~3,~\cite{10}).

\begin{condition}[A0]
For any compact $K\subset\Theta$ and any $\delta>0$,
\begin{equation}
\inf_{\theta\in K}\inf_{\substack{\tau:\theta+\tau\in\Theta\\|\tau|\ge\delta}}
\int_S\bigl(f^{1/2}(x,\theta+\tau)-f^{1/2}(x,\theta)\bigr)^2\,d\nu>0.
\label{eq:A0}
\end{equation}
\end{condition}

Let a sequence $u_n>0$ be given with $u_n\to 0$ and $nu_n^2\to\infty$ as $n\to\infty$. We shall be interested in the moderate-deviation probabilities
\[
P_{\theta_0}\bigl(|\hat\theta_n-\theta_0|>u_n\bigr),
\]
where $\hat\theta_n$ is a maximum-likelihood estimator or a Bayesian estimator. We shall also study the moderate-deviation probabilities of the concentration of the posterior Bayesian measure on the region $|\theta-\theta_0|>u_n$.

The following conditions A1 and A2 (respectively B1 and B2) yield the statements of the theorems according to the rate at which the function $\omega(u)$ tends to zero as $u\to 0$. The weaker moment conditions A1 (respectively B1) will be assumed when $\omega(u)<Cu^\lambda$ for some $\lambda>0$.

\begin{condition}[A1]
For any $\theta_0\in\Theta$, for some neighbourhood $U\subset\Theta$ of the point $\theta_0$, there exist positive constants $C$, $\gamma$, a sequence $\kappa_n=\kappa_n(u_n)$ with $\kappa_n\to 0$ as $n\to\infty$, and a sequence of functions $h_n:S\to\mathbb{R}$ such that for any $C_1$ and for $n>n_0(C_1)$ one has
\begin{equation}
\sup_{|\tau|<C_1u_n}\Bigl|\log\frac{f(x,\theta+\tau)}{f(x,\theta)}\Bigr|\le\kappa_nh_n(x),\qquad x\in S,
\label{eq:A1}
\end{equation}
and $\mathbb{E}_\theta\exp\{\gamma h_n(X_1)\}<C$ for all $\theta\in U$.
\end{condition}

\begin{condition}[A2]
For any $\theta_0\in\Theta$, for some neighbourhood $U\subset\Theta$ of the point $\theta_0$, there exist positive constants $C$, $c$, $\gamma$, a sequence $\kappa_n=\kappa_n(u_n)$ with $\kappa_n|\log u_n|<c$, and a sequence of functions $h_n:S\to\mathbb{R}$ such that~\eqref{eq:A1} holds and $\mathbb{E}_\theta\exp\{\gamma h_n(X_1)\}<C$ for all $\theta\in U$.
\end{condition}

Conditions A1 and A2 with $\kappa_n=\gamma_n^{-1}$ are sufficient for the more difficult-to-verify conditions B1 and B2, respectively, under which the statements of the theorems will be proved.

\begin{condition}[B1]
For any $\theta_0\in\Theta$, for some neighbourhood $U\subset\Theta$ of the point $\theta_0$, there exist constants $\varepsilon>0$, $C$ and a sequence $\gamma_n=\gamma_n(u_n)$ with $\gamma_n\to\infty$ as $n\to\infty$ such that for any $C_1$ and for $n>n_0(C_1)$ one has
\begin{equation}
\mathbb{E}_\theta\Biggl(\frac{f(X,\theta+\tau)}{f(X,\theta)}\,\mathbf{1}\Bigl(\Bigl|\log\frac{f(X,\theta+\tau)}{f(X,\theta)}\Bigr|>\varepsilon\Bigr)\Biggr)^{\gamma_n}<C
\label{eq:B1}
\end{equation}
for all $\theta\in U$, $|\tau|<C_1u_n$.
\end{condition}

\begin{condition}[B2]
For any $\theta_0\in\Theta$, for some neighbourhood $U\subset\Theta$ of the point $\theta_0$, there exist constants $\varepsilon>0$, $c$, $C$ and a sequence $\gamma_n=\gamma_n(u_n)$ with $\gamma_n>c|\log u_n|$ such that~\eqref{eq:B1} holds for all $\theta\in U$, $|\tau|<C_1u_n$.
\end{condition}

In the proofs of the theorems conditions A1 and A2 are used when
\[
\Bigl|\log\frac{f(X,\theta+\tau)}{f(X,\theta)}\Bigr|>\varepsilon.
\]
This means that it is in fact sufficient to require their validity on the sets $\{h_n(x)>\varepsilon/\kappa_n\}$.

For the proof of the theorems under condition A1 or B1 we need in addition the fulfilment of conditions C1 and C2.

\begin{condition}[C1]
For any $\theta_0\in\Theta$ there exist constants $\gamma>0$, $\lambda>0$ and $C$ such that for all $u$, $0<u<u_0$, one has $\omega(u)<Cu^\lambda$ and for all $\theta$ from some neighbourhood $U$ of the point $\theta_0$ one has
\begin{equation}
\mathbb{E}_\theta\bigl[\phi^2(X_1)\mathbf{1}(|\phi(X_1)|>\varepsilon u^{-1})\bigr]<Cu^\gamma.
\label{eq:C1}
\end{equation}
\end{condition}

\begin{condition}[C2]
For any $\theta_0\in\Theta$ there exist $\gamma>0$ and $C>0$ such that for all $\tau\in\mathbb{R}^d$, $0<|\tau|<u_0$, $\theta_0+\tau\in\Theta$, one has
\begin{equation}
\bigl|\mathbb{E}_{\theta_0}[g^2(X_1,\tau)]-\tfrac14\tau^T I(\theta_0)\tau\bigr|<C|\tau|^{2+\gamma}.
\label{eq:C2}
\end{equation}
\end{condition}

Until the present work, proofs of theorems on moderate-deviation probabilities of maximum-likelihood estimators and likelihood-ratio statistics relied on the Taylor expansion of the log-likelihood ratio
\begin{equation}
\log\frac{f(X,\theta+u)}{f(X,\theta)}=u\frac{\partial\log f(X,\theta+u)}{\partial u}\Big|_{u=0}-\frac{u^2}{2}\frac{\partial^2\log f(X,\theta+u)}{\partial u^2}\Big|_{u=v}
\label{eq:Taylor}
\end{equation}
with $v\in(0,u)$. Assumptions were made about the existence of exponential moments of the suprema, with respect to the parameter, of the derivatives of the log-likelihood ratio. Such conditions were considered for derivatives up to the third order~\cite{1,2,8,11,14,17}.

The weakest assumptions appear to be contained in~\cite{1}. In~\cite{1} a theorem on the moderate-deviation probability of the maximum-likelihood estimator was proved under the assumption of existence of an exponential moment of the supremum, with respect to the parameter, of the second derivative of the log-likelihood ratio. The same statement was proved in~\cite{1} when only the existence of an exponential moment of the supremum of the first derivative was required, but it was assumed that the likelihood-ratio function of the sample has, with probability one, a unique local maximum coinciding with the global maximum. In the setting of the present paper the condition of existence of an exponential moment of the supremum of the first derivative can be viewed as a particular case of condition A1 with $\kappa_n=u_n$ under some additional mild restrictions. Moreover, we introduce no assumptions about uniqueness of a local maximum of the likelihood-ratio function coinciding with the global maximum.

The following condition is needed when studying maximum-likelihood estimators in the multidimensional case. For Bayesian estimators it is absent.

\begin{condition}[D]
The set $\Theta$ is convex and bounded, and there exists $m>d$ such that
\begin{equation}
\sup_{\theta\in\Theta}\int_S\Bigl(\frac{|\nabla f(x,\theta)|}{f(x,\theta)}\Bigr)^m f(x,\theta)\,d\nu<\infty.
\label{eq:D}
\end{equation}
Here $\nabla f(x,\theta)$ denotes the gradient of the function $f(x,\theta)$ with respect to the parameter $\theta$.
\end{condition}

For $\varepsilon>0$ put $\phi_\varepsilon(x)=\phi(x)\mathbf{1}(u_n|\phi(x)|<\varepsilon)$, $x\in S$.
The following condition will be used when studying the uniform convergence of the log-likelihood ratio to its locally asymptotically normal approximation in the moderate-deviation zone.

\begin{condition}[E]
For any $\theta_0\in\Theta$ there exist $\beta_1>d$ and $\beta_2>d$ such that for some neighbourhood $U$ of the point $\theta_0$ and for any $\theta$, $\theta_0+u$, $\theta_0+v\in U$ one has
\begin{equation}
\mathbb{E}_\theta\Bigl(\log\frac{f(X_1,\theta_0+v)}{f(X_1,\theta_0+u)}-2(v-u)^T\phi_\varepsilon(X_1)\Bigr)^{\beta_1}\le C|v-u|^{\beta_2}.
\label{eq:E}
\end{equation}
\end{condition}

\section{Main Results}
\label{sec:results}

\subsection{Local asymptotic normality of the log-likelihood ratio in the moderate-deviation zone}

For $u\in\mathbb{R}^d$ denote
\begin{equation}
\zeta_n(u)=2u^T\sum_{i=1}^n\phi_\varepsilon(X_i)-\frac12 nu^T I(\theta_0)u
\end{equation}
and
\begin{equation}
\psi_n=n^{-1/2}I^{-1/2}(\theta_0)\sum_{i=1}^n\phi_\varepsilon(X_i),
\end{equation}
where $\phi_\varepsilon(x)=\phi(x)\mathbf{1}(|\phi(x)|<u_n^{-1}\varepsilon)$, $x\in S$.

For $\theta_0$, $\theta_0+u_nb$, $\theta_0+u_nb+u\in\Theta$, $b\in\mathbb{R}^d$, put
\begin{equation}
\xi_n(X_i,u_nb,u)=\log\frac{f(X_i,\theta_0+u_nb+u)}{f(X_i,\theta_0+u_nb)},\qquad 1\le i\le n.
\end{equation}
If $b=0$ we write $\xi_n(X_i,u)=\xi_n(X_i,0,u)$.

The following Theorem~\ref{th:LAN} is an analogue of the theorem on local asymptotic normality of the log-likelihood ratio~\cite{9,10,13,18} for the zone of moderate deviation probabilities.

\begin{theorem}\label{th:LAN}
Let the statistical experiment have finite Fisher information at the point $\theta_0\in\Theta\subset\mathbb{R}^d$ and let conditions A0 and B2 be satisfied. Then for any $C$ there exist positive functions $\omega_1(u)$, $\omega_1(u)\to 0$ as $u\to 0$, and $\omega_2(u)$, $\omega_2(u)\to 0$ as $u\to 0$, such that for $|u|<C u_n$ and $b\in\mathbb{R}^d$ one has
\begin{equation}
P_{\theta_0+u_nb}\Biggl(\Biggl|\sum_{i=1}^n\xi_n(X_i,u_nb,u)-\zeta_n(u)\Biggr|>nu_n^2\omega_1(u_n)\Biggr)<C_1\exp\bigl\{-nu_n^2/\omega_2(u_n)\bigr\}.
\label{eq:13}
\end{equation}
If in addition conditions D and E are satisfied, then for any $C$ there exist positive functions $\omega_3(u)$, $\omega_3(u)\to 0$ as $u\to 0$, and $\omega_4(u)$, $\omega_4(u)\to 0$ as $u\to 0$, such that
\begin{equation}
P_{\theta_0+u_nb}\Biggl(\sup_{|u|<C u_n}\Biggl|\sum_{i=1}^n\xi_n(X_i,u_nb,u)-\zeta_n(u)\Biggr|>nu_n^2\omega_3(u_n)\Biggr)<C_1\exp\bigl\{-nu_n^2/\omega_4(u_n)\bigr\}.
\label{eq:14}
\end{equation}
All the inequalities above remain valid if condition B2 is replaced by conditions B1, C1 and C2.
\end{theorem}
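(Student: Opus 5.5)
We follow the Hellinger-distance route to local asymptotic normality (as in Chapter~2 of~\cite{10}), replacing every ``in probability'' step by an exponential bound of order $\exp\{-nu_n^2/\omega(u_n)\}$. It suffices to treat $b=0$ under $P_{\theta_0}$: conditions B1, B2, C1, C2 and the finite Fisher information hold uniformly on a neighbourhood $U$, so the shift $\theta_0\to\theta_0+u_nb$ only changes the base point. Write $g_i=g(X_i,u)$, so that $\xi_n(X_i,u)=2\log(1+g_i)$.

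\textbf{Step 1: truncation.} Split each summand according to whether $|\xi_n(X_i,u)|\le\varepsilon$ or $|\xi_n(X_i,u)|>\varepsilon$.

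On the event $|\xi_n|\le\varepsilon$ we have $|g_i|\le c\varepsilon$, and Taylor's formula gives
\[
2\log(1+g_i)=2g_i-g_i^2+r_i,\qquad |r_i|\le C|g_i|^3\le C\varepsilon g_i^2 .
\]

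For the sum over the event $|\xi_n|>\varepsilon$, we apply Chebyshev's exponential inequality with parameter $\gamma_n$. Condition B1/B2 gives
\[
\mathbb{E}\exp\{\gamma_n|\xi_n|\mathbf 1(|\xi_n|>\varepsilon)\}\le 1+Cp_n,
\]
where $p_n=P(|\xi_n|>\varepsilon)$. This requires a two-sided version, obtained by the symmetric argument with roles of $\theta_0$ and $\theta_0+u$ exchanged. Since $p_n\le C\varepsilon^{-2}\mathbb{E}g^2=O(u_n^2)$, the exceptional sum exceeds $nu_n^2\omega_1$ with probability at most $\exp\{-\gamma_n nu_n^2\omega_1+Cnu_n^2\}$. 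This is of the required order because $\gamma_n\to\infty$.

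\textbf{Step 2: the linear term and the quadratic term.}

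For the linear term, write
\[
\sum_i g_i=u^T\sum_i\phi(X_i)+\sum_i\bigl(g_i-u^T\phi(X_i)\bigr).
\]
We replace $\phi$ by $\phi_\varepsilon$ and centre, using $\mathbb{E}g=-\tfrac12\mathbb{E}g^2$. The remainder has bounded-range summands (after truncation at level $\varepsilon$, resp.\ $\varepsilon u_n^{-1}$ for $\phi$) with variance $\le |u|^2\omega(|u|)$. Bernstein's inequality therefore gives
\[
P\bigl(|\cdot|>nu_n^2\omega_1\bigr)\le\exp\bigl\{-c\,nu_n^2\omega_1^2/(\omega(Cu_n)+\varepsilon\omega_1)\bigr\}.
\]
Choosing $\omega_1\to0$ slowly (and $\varepsilon=\varepsilon_n\to0$ slowly) makes the exponent $nu_n^2/\omega_2$.

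For the quadratic term $\sum_i g_i^2\mathbf 1(|g_i|\le c\varepsilon)$, Bernstein's inequality again shows concentration around $n\mathbb{E}g^2=\tfrac n4u^TIu(1+o(1))$. Its contribution $-\tfrac n4u^TIu$, together with the centring term $2n\mathbb{E}g=-n\mathbb{E}g^2$, produces $-\tfrac12nu^TIu$, and $\zeta_n(u)$ emerges.

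\textbf{Step 3: the exponent under B2 versus B1.} Under B2 the rate $\gamma_n>c|\log u_n|$ is what makes Step~1 succeed when $\omega$ decays arbitrarily slowly; the $\log$ absorbs the polynomial losses $u_n^{-C}$ incurred when the tails of $\phi$ beyond the level $\varepsilon u_n^{-1}$ are removed. Under B1, conditions C1 and C2 supply the polynomial rates $\omega(u)<Cu^\lambda$, the tail bound $Cu^\gamma$, and the $|\tau|^{2+\gamma}$ error in C2, so that only $\gamma_n\to\infty$ is needed. This trade-off is the main obstacle. We must check that the truncation of $\phi$ at level $\varepsilon u_n^{-1}$ costs $O(u_n^\gamma)$ in mean and variance, and that this combines with $\gamma_n$ to give a superexponential-in-$nu_n^2$ bound.

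\textbf{Step 4: uniformity (\eqref{eq:14}).} Cover the ball $|u|<Cu_n$ by a grid of mesh $u_n\delta_n$; the grid has $\delta_n^{-d}$ points. Apply \eqref{eq:13} at each grid point and take a union bound; the factor $\delta_n^{-d}$ is negligible against $\exp\{-nu_n^2/\omega_2\}$ whenever $\log\delta_n^{-1}=o(nu_n^2/\omega_2)$.

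Between grid points, control the increments of $\sum_i\xi_n(X_i,u)-\zeta_n(u)$ by a chaining (Kolmogorov-type) argument. Condition E gives a $\beta_1$-th moment bound $C|v-u|^{\beta_2}$ with $\beta_2>d$, hence Hölder continuity of the centred process. Condition D, with $m>d$, controls the deterministic part and the sup of the gradient via a Sobolev embedding on the convex set $\Theta$. Exponential smallness of the oscillation is obtained by applying the moment bound for increments to blocks, or via a Bernstein estimate on the truncated increments combined with the generic chaining over dyadic grids.
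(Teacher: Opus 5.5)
Your Step~1 contains the decisive error. Conditions B1/B2 bound $\mathbb{E}\bigl[e^{\gamma_n\xi_n}\mathbf{1}(|\xi_n|>\varepsilon)\bigr]$ by a constant $C$, not by $Cp_n$. All you can conclude is $\mathbb{E}\exp\{\gamma_n\xi_n\mathbf{1}(|\xi_n|>\varepsilon)\}\le 1+C$, and its $n$-th power $e^{Cn}$ is useless against $e^{-\gamma_n nu_n^2\omega_1}$. A warning sign: as written, your Steps~1--2 never use the rate in B2 or conditions C1--C2.

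What works, and what the paper does, is to lower the Chebyshev parameter to $\gamma_n/2$. On $\{\xi_n>\varepsilon\}$ one has $e^{\gamma_n\xi_n/2}\le e^{-\gamma_n\varepsilon/2}e^{\gamma_n\xi_n}$, so the moment generating function exceeds $1$ by at most $Ce^{-\gamma_n\varepsilon/2}$. The bound becomes $\exp\{-\gamma_n nu_n^2\omega_1/4+Cne^{-\gamma_n\varepsilon/2}\}$. This has the required order only if $ne^{-\gamma_n\varepsilon/2}=O(\gamma_n nu_n^2\omega_1)$, i.e.\ essentially $\gamma_n\varepsilon\ge 4|\log u_n|$. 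That is the true role of $\gamma_n>c|\log u_n|$ in B2, not the tails of $\phi$ as your Step~3 suggests. It forces the truncation level as follows:
\begin{itemize}
\item Under B2, when $\gamma_n$ is only of order $|\log u_n|$, the level must stay bounded away from zero. A fixed, possibly large, $\varepsilon$ is allowed, since B2 at level $\varepsilon$ implies it at any larger level.
\item Under B1, it must be a level $\varepsilon_n$ with $\gamma_n\varepsilon_n\gg|\log u_n|$, which is large when $\gamma_n$ grows slowly.
\end{itemize}

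This in turn breaks Step~2. The summands have range $\asymp\varepsilon$, total variance $O(nu_n^2\omega(u_n))$ and deviation $nu_n^2\omega_1$. Bernstein's inequality then gives an exponent at most of order $nu_n^2\omega_1/\varepsilon=o(nu_n^2)$ for fixed $\varepsilon$, as your own display shows. Likewise, your pointwise remainder bound $|r_i|\le C\varepsilon g_i^2$ sums to $O(\varepsilon nu_n^2)$. You repair both by sending $\varepsilon_n\to0$, which is exactly what the corrected Step~1 forbids under B2.

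The missing tool is Prokhorov's (or Bennett's) inequality, whose extra logarithmic factor does the work. The paper applies it to the whole bounded variable $\xi_{n\varepsilon}(X_1,u)-2u^T\phi_\varepsilon(X_1)+\tfrac12u^TI(\theta_0)u$. Its mean and second moment are $O(u_n^2\omega(u_n))$, so the cubic Taylor remainder enters only through its moments. The resulting exponent is of order $\varepsilon^{-1}nu_n^2\omega_1\log\bigl(\varepsilon\omega_1/\omega(u_n)\bigr)$, which is $\gg nu_n^2$ for a suitable slowly decaying $\omega_1$ even with $\varepsilon$ fixed.

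The same inequality resolves the B1 ``trade-off'' you leave open. Truncate at $\varepsilon_n=\lambda_n^{-1}|\log u_n|$ with $\lambda_n\to\infty$ and $\gamma_n/\lambda_n\to\infty$. Conditions C1--C2 (with $\omega(u)<Cu^\lambda$) make the mean and variance of the truncated remainder polynomially small, $O(u_n^{2+\gamma})$, including the band $\varepsilon<|\xi_n|<\varepsilon_n$. The logarithm in Prokhorov's bound is then of order $|\log u_n|$ and pays for the large range $\varepsilon_n$, giving an exponent of order $\lambda_n nu_n^2\omega_1$.

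Your Step~4 is in the right direction but vaguer than needed. The union bound tolerates a mesh $\delta_n=\exp\{-nu_n^2/\omega_3(u_n)\}$ with $\omega_3\to0$ more slowly than $\omega_2$. With such a mesh, the polynomial moment bound from condition E (via Lemma~\ref{lem:4.2} and a Kolmogorov-type bound on the expected supremum of increments) plus Markov's inequality already makes the oscillation superexponentially small. The exception is the regime $nu_n^2\lesssim\log n$, which needs a separate polynomial mesh.
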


The reasoning will be based on the following theorem.

\begin{theorem}\label{th:LD}
Let the statistical experiment have finite Fisher information at the point $\theta_0\in\Theta\subset\mathbb{R}^d$. Then for any open set $\Omega\subset\mathbb{R}^d$ one has
\begin{equation}
\lim_{n\to\infty}(nu_n^2)^{-1}\log P_{\theta_0}(\psi_n\in n^{1/2}u_n\Omega)=-\frac12\inf_{u\in\Omega}|u|^2
\label{eq:15}
\end{equation}
and for any vector $b\in\mathbb{R}^d$ one has
\begin{equation}
\lim_{n\to\infty}(nu_n^2)^{-1}\log P_{\theta_0+n^{1/2}u_nb}(\psi_n\in n^{1/2}u_nb+n^{1/2}u_n\Omega)=-\frac12\inf_{u\in\Omega}|u|^2.
\label{eq:16}
\end{equation}
\end{theorem}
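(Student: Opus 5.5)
The plan is to derive \eqref{eq:15} from the Gärtner--Ellis theorem applied to the normalized sums $S_n=\sum_{i=1}^n\phi_\varepsilon(X_i)$ at speed $nu_n^2$. The event $\{\psi_n\in n^{1/2}u_n\Omega\}$ equals $\{(nu_n)^{-1}S_n\in I^{1/2}(\theta_0)\Omega\}$, so it suffices to find the limit of
\[
\Lambda_n(\lambda)=(nu_n^2)^{-1}\log \mathbb{E}_{\theta_0}\exp\{u_n\lambda^TS_n\}=u_n^{-2}\log\mathbb{E}_{\theta_0}\exp\{u_n\lambda^T\phi_\varepsilon(X_1)\}
\]
for every $\lambda\in\mathbb{R}^d$. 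The truncation is the key feature: since $|u_n\lambda^T\phi_\varepsilon|\le\varepsilon|\lambda|$, all the exponential moments are finite and a Taylor expansion of the exponential is legitimate. No moment condition beyond finite Fisher information is needed.

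First I would collect two consequences of \eqref{eq:FI}. The first is $\mathbb{E}_{\theta_0}\phi=0$. This holds because $\mathbb{E}_{\theta_0}g(X,\tau)=-\frac12\int(f^{1/2}(x,\theta_0+\tau)-f^{1/2}(x,\theta_0))^2d\nu=O(|\tau|^2)$, while $\mathbb{E}_{\theta_0}(g-\tau^T\phi)=o(|\tau|)$ by Cauchy--Schwarz. The second is $\mathbb{E}\phi\phi^T=\frac14 I(\theta_0)<\infty$. From these:

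1. For the mean, $|\mathbb{E}\phi_\varepsilon|=|\mathbb{E}\phi\mathbf{1}(|\phi|\ge\varepsilon/u_n)|\le (u_n/\varepsilon)\mathbb{E}|\phi|^2\mathbf{1}(|\phi|\ge\varepsilon/u_n)=o(u_n)$, by dominated convergence.
2. For the covariance, $\mathbb{E}\phi_\varepsilon\phi_\varepsilon^T\to\mathbb{E}\phi\phi^T$.

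Then I expand $e^x=1+x+\frac{x^2}{2}+O(|x|^3)$ for $|x|\le\varepsilon|\lambda|$. The linear term contributes $u_n\lambda^T\mathbb{E}\phi_\varepsilon=o(u_n^2)$ by item 1. The quadratic term contributes $\frac{u_n^2}{2}\lambda^T\mathbb{E}\phi\phi^T\lambda(1+o(1))$ by item 2.

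The cubic remainder is the step needing care, since the crude bound $\varepsilon|\lambda|^3u_n^2\mathbb{E}|\phi|^2$ does not vanish for fixed $\varepsilon$. I would split at a second level $\delta<\varepsilon$:
\[
u_n^3\mathbb{E}|\phi_\varepsilon|^3\le \delta u_n^2\mathbb{E}|\phi|^2+\varepsilon u_n^2\mathbb{E}|\phi|^2\mathbf{1}(|\phi|>\delta/u_n),
\]
which is $u_n^2(C\delta+o(1))$. Letting $n\to\infty$ and then $\delta\to0$ gives $o(u_n^2)$.

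Hence $\Lambda_n(\lambda)\to\Lambda(\lambda)=\frac12\lambda^T\Sigma\lambda$, where $\Sigma$ is the limiting covariance of $\phi_\varepsilon$. This limit is finite, differentiable and nondegenerate because $\operatorname{rank} I(\theta_0)=d$. Gärtner--Ellis then yields an LDP with quadratic rate $\frac12 v^T\Sigma^{-1}v$. In the normalization of $\psi_n$ this is a positive multiple of $|u|^2$; I would check carefully that the constant matches the stated $\frac12|u|^2$.

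To pass from the LDP bounds to the equality \eqref{eq:15}, note the lower bound is over $\Omega$ and the upper bound is over $\overline\Omega$. For an open set $\Omega$, $\inf_{\overline\Omega}|u|^2=\inf_{\Omega}|u|^2$ by continuity of $|u|^2$, so the two bounds coincide.

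For \eqref{eq:16} I would repeat the cumulant computation under the alternative $P_{\theta_1}$, where $\theta_1=\theta_0+u_nb$ in the local parametrization. I would write
\[
\mathbb{E}_{\theta_1}e^{u_n\lambda^T\phi_\varepsilon}=\mathbb{E}_{\theta_0}\bigl[e^{u_n\lambda^T\phi_\varepsilon}(1+g(X,u_nb))^2\bigr]+\text{(singular part)}.
\]
The singular part has mass $O(\int(g-u_nb^T\phi)^2dP_{\theta_0})$ and contributes at most $e^{\varepsilon|\lambda|}\,u_n^2\omega(u_n)=o(u_n^2)$. In the absolutely continuous part I would replace $g$ by $u_nb^T\phi$ using \eqref{eq:FI}, with an error of order $u_n\cdot u_n\omega^{1/2}(u_n)=o(u_n^2)$ after Cauchy--Schwarz. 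The cross term $2u_n^2\lambda^T\mathbb{E}\phi_\varepsilon\phi^Tb$ then produces exactly the mean shift. This gives $\Lambda_n(\lambda)\to\frac12\lambda^T\Sigma\lambda+\lambda^T m(b)$, where $m(b)$ is linear in $b$. Gärtner--Ellis then gives the same quadratic rate centred at the shift, which is \eqref{eq:16} after matching the centring with $n^{1/2}u_nb$.

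I expect the main obstacle to be the bookkeeping in this contiguity step, rather than the unshifted case. It requires controlling $\mathbb{E}_{\theta_0}[e^{u_n\lambda^T\phi_\varepsilon}g^2]$ and the singular part uniformly to order $o(u_n^2)$ using only \eqref{eq:FI}, and the truncation $|u_n\phi_\varepsilon|<\varepsilon$ is what makes this possible.
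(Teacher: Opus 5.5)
Your mechanism differs from the paper's treatment and is sound. The paper gives no argument for this theorem; it only refers to \cite[Theorem~3]{6} and \cite{7}. In your argument, the truncation $|u_n\phi_\varepsilon|<\varepsilon$ makes every exponential moment finite, and your two-level split disposes of the cubic remainder. G\"artner--Ellis at speed $nu_n^2$ then needs only \eqref{eq:FI} and $\operatorname{rank}I(\theta_0)=d$.

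The gap is the step you postponed, ``check carefully that the constant matches.'' It does not match. You derived $\Sigma=\mathbb{E}_{\theta_0}\phi\phi^T=\frac14 I(\theta_0)$, so $\Lambda^*(v)=2v^TI^{-1}(\theta_0)v$. Since $\{\psi_n\in n^{1/2}u_n\Omega\}=\{(nu_n)^{-1}S_n\in I^{1/2}(\theta_0)\Omega\}$, your argument gives
\[
\lim_{n\to\infty}(nu_n^2)^{-1}\log P_{\theta_0}(\psi_n\in n^{1/2}u_n\Omega)=-2\inf_{u\in\Omega}|u|^2 ,
\]
which is four times the printed rate. Heuristically, $\psi_n$ is asymptotically $N(0,\frac14\mathrm{Id})$, not standard normal. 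So under the paper's normalisation \eqref{eq:FI}--\eqref{eq:I}, \eqref{eq:15} with the constant $\frac12$ is not what your proof establishes. In fact it fails whenever $\inf_\Omega|u|>0$. The printed rate does hold for $2\psi_n$, which is the combination that appears in \eqref{eq:21}.

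The same problem arises in \eqref{eq:16}. You silently replaced the alternative $\theta_0+n^{1/2}u_nb$ by $\theta_0+u_nb$. That correction is necessary, because $n^{1/2}u_n\to\infty$ and $\Theta$ is bounded, so you should state it explicitly. Your cross term is $2u_n^2\lambda^T\mathbb{E}_{\theta_0}[\phi\phi^T]b=\frac12u_n^2\lambda^TI(\theta_0)b$, so the limiting cumulant is $\frac18\lambda^TI(\theta_0)\lambda+\frac12\lambda^TI(\theta_0)b$. Hence the centring of $\psi_n$ is $\frac12n^{1/2}u_nI^{1/2}(\theta_0)b$, not $n^{1/2}u_nb$, and the rate is again $2\inf_\Omega|u|^2$. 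Your final ``matching the centring with $n^{1/2}u_nb$'' therefore also fails. Carry the computation through and state the corrected normalisation: your argument proves the statement for $2\psi_n$, with centring $n^{1/2}u_nI^{1/2}(\theta_0)b$ under $P_{\theta_0+u_nb}$.

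Two smaller bookkeeping points.
\begin{enumerate}
\item The $\lambda$-free terms of order $u_n^2$ cancel because $\mathbb{E}_{\theta_0}(1+g)^2=1$ under the paper's common-support assumption. That assumption also makes your singular part identically zero. Your bound for the singular part is not implied by \eqref{eq:FI}, which involves only $P_{\theta_0}$-integrals, so simply drop it.
\item Terms such as $\mathbb{E}_{\theta_0}[Y^2|g|]$ and $\mathbb{E}_{\theta_0}[|Y|g^2]$, with $Y=u_n\lambda^T\phi_\varepsilon$, are only $O(\varepsilon u_n^2)$ by Cauchy--Schwarz. Write $g=u_nb^T\phi+r$ with $\mathbb{E}_{\theta_0}r^2=o(u_n^2)$, and reuse your two-level truncation on the $\phi$ part.
\end{enumerate}
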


The proofs of Theorem~\ref{th:LD} can be found in~\cite{6}, Theorem~3, and in~\cite{7}.

\subsection{Maximum-likelihood estimator}

Henceforth we assume that for all $x\in S$ the density $f(x,\theta)$ is continuous in $\theta$ on the set $\Theta$.

Define the maximum-likelihood estimator $\hat\theta_n=\hat\theta_n(X_1,\dots,X_n)$ by the equation
\begin{equation}
\prod_{i=1}^n f(X_i,\hat\theta_n)=\max_{\theta\in\Theta}\prod_{i=1}^n f(X_i,\theta),
\label{eq:MLE}
\end{equation}
where as the maximum-likelihood estimator $\hat\theta_n$ one takes any of the points at which the maximum is attained.

\begin{theorem}\label{th:MLE}
Let the statistical experiment have finite Fisher information at the point $\theta_0\in\Theta$ and let conditions A0 and B2 be satisfied. Then, if $d=1$ and the set $\Theta$ is a bounded interval in $\mathbb{R}$, for any open set $\Omega\subset\mathbb{R}$ one has
\begin{equation}
\lim_{n\to\infty}(nu_n^2/2)^{-1}\log P_{\theta_0}\bigl(I^{1/2}(\theta_0)(\hat\theta_n-\theta_0)\in u_n\Omega\bigr)=-\inf_{x\in\Omega}|x|^2
\label{eq:18}
\end{equation}
and
\begin{equation}
\lim_{n\to\infty}(nu_n^2/2)^{-1}\log P_{\theta_0+u_n}\bigl(I^{1/2}(\theta_0)(\hat\theta_n-\theta_0-u_n)\in u_n\Omega\bigr)=-\inf_{x\in\Omega}|x|^2.
\label{eq:19}
\end{equation}
If $d>1$ and in addition condition D is satisfied, then for any open set $\Omega\subset\mathbb{R}^d$ relation~\eqref{eq:18} holds, and for any vector $b\in\mathbb{R}^d$ one has
\begin{equation}
\lim_{n\to\infty}(nu_n^2/2)^{-1}\log P_{\theta_0+u_nb}\bigl(I^{1/2}(\theta_0)(\hat\theta_n-\theta_0-u_nb)\in u_n\Omega\bigr)=-\inf_{x\in\Omega}|x|^2.
\label{eq:20}
\end{equation}
For any $\delta>0$ and any vector $b\in\mathbb{R}^d$ one has
\begin{equation}
\lim_{n\to\infty}(nu_n^2/2)^{-1}\log P_{\theta_0+u_nb}\bigl(|I^{1/2}(\theta_0)(\hat\theta_n-\theta_0)-2n^{-1/2}\psi_n|>\delta u_n\bigr)=-\infty,
\label{eq:21}
\end{equation}
\begin{equation}
\begin{aligned}
&\lim_{n\to\infty}(nu_n^2/2)^{-1}\log P_{\theta_0+u_nb}\Biggl(
\Biggl|n^{-1}\sum_{i=1}^n\xi_n(X_i,u_nb,\hat\theta_n-\theta_0-u_nb)\\
&\qquad-2(\psi_n-n^{1/2}u_nb)^T(\psi_n-n^{1/2}u_nb)\Biggr|>\delta nu_n^2\Biggr)=-\infty
\end{aligned}
\label{eq:22}
\end{equation}
and
\begin{equation}
\begin{aligned}
&\lim_{n\to\infty}(nu_n^2/2)^{-1}\log P_{\theta_0+u_nb}\Biggl(
\Biggl|\sum_{i=1}^n\xi_n(X_i,u_nb,\hat\theta_n-\theta_0-u_nb)\\
&\qquad-n(\hat\theta_n-\theta_0-u_nb)^T I(\theta_0)(\hat\theta_n-\theta_0-u_nb)/2\Biggr|>\delta nu_n^2\Biggr)=-\infty.
\end{aligned}
\label{eq:23}
\end{equation}
All the statements above remain valid if condition B2 is replaced by conditions B1, C1 and C2.
\end{theorem}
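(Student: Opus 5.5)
The plan is the standard Ibragimov--Has'minskii route, run on the moderate-deviation scale $nu_n^2$ instead of the $O(1)$ scale, with Theorem~\ref{th:LAN} and Theorem~\ref{th:LD} as the two inputs. Write $\tau=\theta-\theta_0-u_nb$ and $Z_n(\tau)=\sum_{i=1}^n\xi_n(X_i,u_nb,\tau)$. The MLE maximizes $Z_n$ over $\tau$, and $Z_n(0)=0$. The argument has three parts, in this order:
\begin{itemize}
\item[(i)] \emph{Localization}: discard $|\tau|\ge Cu_n$ at super-exponential cost on the scale $nu_n^2$.
\item[(ii)] \emph{Approximation}: on $|\tau|<Cu_n$, replace $Z_n$ by the quadratic $\zeta_n(\tau)$ using Theorem~\ref{th:LAN}.
\item[(iii)] \emph{Transfer}: pass the large-deviation principle of Theorem~\ref{th:LD} for $\psi_n$ to the maximizer by the contraction principle.
\end{itemize}

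\textbf{Localization.} The aim is
\[
\limsup_n (nu_n^2)^{-1}\log P_{\theta_0+u_nb}\Bigl(\sup_{|\tau|\ge Cu_n}Z_n(\tau)\ge 0\Bigr)\le -c\,C^2 .
\]
Since $C$ is arbitrary, this suffices. I would obtain it in two pieces.
\begin{itemize}
\item \emph{Far zone} $|\tau|\ge\delta$. Use condition A0 together with the Hellinger bound $E\prod_i(f(X_i,\theta)/f(X_i,\theta_0))^{1/2}\le\exp\{-n\rho^2(\theta,\theta_0)/2\}$. This gives $\exp\{-cn\}$, which is negligible relative to $\exp\{-Cnu_n^2\}$.
\item \emph{Intermediate zone} $Cu_n\le|\tau|<\delta$. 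Cover it by dyadic shells $2^kCu_n\le|\tau|<2^{k+1}Cu_n$. Finite Fisher information gives a Hellinger distance $\asymp|\tau|^2$ on each shell, hence exponent $-cn4^kC^2u_n^2$.
\end{itemize}
To pass from pointwise bounds to a supremum in $\tau$:
\begin{itemize}
\item For $d=1$, I would exploit monotonicity-type arguments on the interval: the Hellinger process $Z_n^{1/2}$ has H\"older increments controlled in $L^2$ by finite Fisher information.
\item For $d>1$, condition D gives an $L^m$ modulus with $m>d$. With a chaining or Sobolev-embedding (Garsia--Rodemich--Rumsey) argument on a grid of mesh $u_n\eta$, the sum over shells is dominated by its first term.
\item Condition B2 (or B1 with C1 and C2) enters through the truncation at $|\log(f_{\theta+\tau}/f_\theta)|>\varepsilon$. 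The large log-likelihood jumps are then controlled by Chebyshev's inequality with the $\gamma_n$-th moment, giving probability at most $C^n\cdot(\text{small})^{\gamma_n n}$. This is $o(\exp\{-Knu_n^2\})$ because $\gamma_n\gg|\log u_n|$, or $\gamma_n\to\infty$ in the B1 case.
\end{itemize}

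\textbf{Approximation.} On $|\tau|<Cu_n$, apply Theorem~\ref{th:LAN}. For $d>1$ use the uniform version \eqref{eq:14}. For $d=1$, use \eqref{eq:13} on a finite grid of mesh $\eta u_n$, combined with continuity and the concavity-like structure of $\zeta_n$. The result is that, outside an event of super-exponentially small probability, $\sup_{|\tau|<Cu_n}|Z_n(\tau)-\zeta_n(\tau)|\le nu_n^2\omega(u_n)$.

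The maximizer of $\zeta_n$ is $\tau^*=2n^{-1/2}I^{-1/2}\psi_n$, up to the centering by $u_nb$. Because $\zeta_n$ is strongly concave with curvature $nI$, any near-maximizer satisfies $|\tau-\tau^*|^2\lesssim u_n^2\omega(u_n)$. This yields \eqref{eq:21}. Substituting $\hat\tau$ into $\zeta_n$ and using the closeness to $\tau^*$ then yields \eqref{eq:22} and \eqref{eq:23}. Finally, \eqref{eq:18}--\eqref{eq:20} follow from \eqref{eq:21}, Theorem~\ref{th:LD} (in its shifted form \eqref{eq:16}), and the standard exponential-equivalence argument, with the outer approximation $\Omega^{\delta}$ for the upper bound and the inner approximation $\Omega_{-\delta}$ for the lower bound, as $\delta\to0$.

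The main obstacle is the localization step in the intermediate zone. There, pointwise Hellinger bounds must be made uniform in $\tau$ at the exponential scale $nu_n^2$ without derivative-moment conditions. My first attempt would be to bound the expected number of grid points in each shell at which the likelihood ratio exceeds $e^{-cn|\tau|^2}$, and to control the oscillation between grid points using D (or, for $d=1$, the $L^2$ Hellinger modulus from finite Fisher information).
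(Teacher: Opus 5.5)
Your architecture (localize, replace the log-likelihood by $\zeta_n$, transfer the large-deviation principle for $\psi_n$) is the paper's. Your localization is essentially Theorem~5.1 of Chapter~1 of Ibragimov--Has'minskii, which the paper uses implicitly. However, you have misplaced the main difficulty: it lies in the approximation step, and that step has a genuine gap. For $d>1$ you invoke \eqref{eq:14}, but Theorem~\ref{th:LAN} gives \eqref{eq:14} only under conditions D \emph{and} E, and E is not a hypothesis of Theorem~\ref{th:MLE}. For $d=1$, applying \eqref{eq:13} on a grid of mesh $\eta u_n$ and using the concavity of $\zeta_n$ controls only $\zeta_n$ between nodes. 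The log-likelihood $\sum_i\xi_n(X_i,u_nb,\tau)$ is not concave, and nothing in your argument prevents it from exceeding $\zeta_n$ by order $nu_n^2$ between nodes, with $\hat\theta_n$ sitting there. The only regularity you have for the actual process is a polynomial-moment bound on the likelihood ratio $Z_n$ of \eqref{eq:65}, namely $\mathbb{E}(Z_n^{1/2}(u)-Z_n^{1/2}(v))^2\le Cn|u-v|^2$ (or \eqref{eq:75} under D). At mesh $\eta u_n$ its right-hand side is of order $\eta^2nu_n^2\to\infty$. So neither Markov's inequality nor chaining (your Garsia--Rodemich--Rumsey remark included) can give probabilities of order $e^{-Knu_n^2}$. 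A two-sided uniform bound on the \emph{log}-likelihood over $|\tau|<Cu_n$, as you formulate it, is more than one can expect without E, and more than is needed.

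The missing idea is the paper's exponentially fine net, $\delta_n=\exp\{-nu_n^2/\omega_3(u_n)\}$ with $\omega_3\to0$ more slowly than $\omega_2$. Because \eqref{eq:13} is super-exponential on the scale $nu_n^2$, the union bound over the roughly $(u_n/\delta_n)^d$ nodes is still negligible. Between nodes one controls $Z_n^{1/2}$ \emph{additively}. Markov's inequality applied to the polynomial modulus bound at the tiny scale $\delta_n$ gives $\sup_{|u-u_{in}|<\delta_n}|Z_n^{1/2}(u)-Z_n^{1/2}(u_{in})|\le e^{-cnu_n^2/\omega_3(u_n)}$ off an event of comparably small probability, see \eqref{eq:74}. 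For $d>1$ this is exactly where D enters, through \eqref{eq:75}; when $u_n\lesssim\sqrt{\log n/n}$ a polynomially fine mesh $n^{-c_1}u_n$ is used instead, see \eqref{eq:77}. Since $Z_n(\hat\tau)\ge Z_n(0)=1$, such an additive error becomes a negligible logarithmic error at the maximizer, see \eqref{eq:84}; elsewhere only the resulting upper bound on $Z_n$ is needed. This yields an event $D_n$ with $P(\overline{D}_n)\le Ce^{-C_1nu_n^2}$, $C_1$ arbitrary, without condition E. On $D_n$, after your localization, your strong-concavity argument gives \eqref{eq:21}--\eqref{eq:23}, and your exponential-equivalence transfer gives both bounds in \eqref{eq:18}--\eqref{eq:20}. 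For the lower bound this is a self-contained alternative; the paper instead takes it from the Bahadur-type bound for $u_n$-consistent estimators. Note also that B2 plays no role in localization, which rests only on Hellinger-distance and modulus estimates; B2 enters solely through \eqref{eq:13}.
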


The following theorem is a consequence of Theorem~\ref{th:MLE}. It treats the large-deviation probabilities of the maximum-likelihood estimator in the setting of local asymptotic efficiency in the Bahadur sense.

\begin{theorem}\label{th:MLE-Bahadur}
Let the statistical experiment have finite Fisher information at the point $\theta_0\in\Theta$ and let conditions A0 and B1 be satisfied. Then, if $d=1$ and the set $\Theta$ is a bounded interval in $\mathbb{R}$, for any open set $\Omega\subset\mathbb{R}$ one has
\begin{equation}
\lim_{u\to 0}\lim_{n\to\infty}(nu^2/2)^{-1}\log P_{\theta_0}\bigl(I^{1/2}(\theta_0)(\hat\theta_n-\theta_0)\in u\Omega\bigr)=-\inf_{x\in\Omega}|x|^2.
\label{eq:24}
\end{equation}
If $d>1$ and in addition condition D is satisfied, then for any open set $\Omega\subset\mathbb{R}^d$ relation~\eqref{eq:24} holds.

Moreover, for any $\delta>0$ one has
\begin{equation}
\lim_{u\to 0}\lim_{n\to\infty}(nu^2/2)^{-1}\log P_{\theta_0}\bigl(|I^{1/2}(\theta_0)(\hat\theta_n-\theta_0)-2n^{-1/2}\psi_n|>\delta u\bigr)=-\infty,
\label{eq:25}
\end{equation}
\begin{equation}
\lim_{u\to 0}\lim_{n\to\infty}(nu^2/2)^{-1}\log P_{\theta_0}\Biggl(
\Biggl|n^{-1}\sum_{i=1}^n\xi_n(X_i,\hat\theta_n-\theta_0)-2\psi_n^T\psi_n\Biggr|>\delta nu^2\Biggr)=-\infty
\label{eq:26}
\end{equation}
and
\begin{equation}
\lim_{u\to 0}\lim_{n\to\infty}(nu^2/2)^{-1}\log P_{\theta_0}\Biggl(
\Biggl|\sum_{i=1}^n\xi_n(X_i,\hat\theta_n-\theta_0)
-n(\hat\theta_n-\theta_0)^T I(\theta_0)(\hat\theta_n-\theta_0)/2\Biggr|>\delta nu^2\Biggr)=-\infty.
\label{eq:27}
\end{equation}
\end{theorem}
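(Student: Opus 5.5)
Theorem~\ref{th:MLE-Bahadur} will be deduced from Theorem~\ref{th:MLE} by a diagonal argument. The double limit concerns the regime $u$ fixed and small, $n\to\infty$, so $nu^2\to\infty$ automatically. For fixed $u$, however, the sequence $u_n\equiv u$ does not tend to zero, so Theorem~\ref{th:MLE} cannot be applied directly.

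The plan is to argue by contradiction. Suppose \eqref{eq:24} fails, say for the upper bound; the lower bound and \eqref{eq:25}--\eqref{eq:27} are handled identically. Then there are $\eta>0$ and a sequence $u^{(k)}\to 0$ such that for each $k$
\[
\limsup_{n\to\infty}(n(u^{(k)})^2/2)^{-1}\log P_{\theta_0}\bigl(I^{1/2}(\theta_0)(\hat\theta_n-\theta_0)\in u^{(k)}\Omega\bigr)>-\inf_{x\in\Omega}|x|^2+\eta .
\]
For each $k$ we choose $n_k$ increasing so fast that $n_k (u^{(k)})^2\ge k$ and the displayed inequality holds at $n=n_k$ up to $\eta/2$. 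We then define $u_n=u^{(k)}$ for $n_k\le n<n_{k+1}$. This gives $u_n\to0$ and $nu_n^2\to\infty$. Along the subsequence $n_k$, relation \eqref{eq:18} of Theorem~\ref{th:MLE} is then violated, which is a contradiction. The same scheme applied to \eqref{eq:21}--\eqref{eq:23} with $b=0$ yields \eqref{eq:25}--\eqref{eq:27}. In the multidimensional case condition D is carried along unchanged.

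The substantive point, and the main obstacle, is verifying that the hypotheses of Theorem~\ref{th:MLE} hold for the constructed sequence $u_n$. Theorem~\ref{th:MLE-Bahadur} assumes only B1, so we must use the variant of Theorem~\ref{th:MLE} under B1, C1, C2. Condition B1 involves a sequence $\gamma_n(u_n)\to\infty$ depending on $u_n$. I interpret the hypothesis of Theorem~\ref{th:MLE-Bahadur} as B1 holding for every admissible sequence $u_n$, with a $\gamma_n$ that tends to infinity as $u_n\to0$; equivalently, \eqref{eq:B1} holds with exponent $\gamma(u)\to\infty$ uniformly over $|\tau|<C_1u$. The diagonal sequence is admissible under this reading. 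If necessary, the $n_k$ are chosen so large that the thresholds $n_0(C_1)$ in B1 are exceeded. Conditions C1 and C2 do not involve $u_n$, and I would check that they are implied in this setting, or else take them as implicitly included in the hypotheses (the paper's phrasing suggests this).

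A second point is uniformity. The constants in Theorem~\ref{th:MLE} must not depend on the particular sequence. This is guaranteed because the argument is by contradiction on a single diagonal sequence: no uniformity is needed beyond the statement of Theorem~\ref{th:MLE} for that one sequence.

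Finally, the normalisations must match. For \eqref{eq:25}--\eqref{eq:27} the event thresholds $\delta u$ and $\delta nu^2$ coincide with $\delta u_n$ and $\delta nu_n^2$ along the subsequence. The limit $-\infty$ in \eqref{eq:21}--\eqref{eq:23} then gives $-\infty$ after the limit $u\to0$, since any finite lower bound $-M$ violated along $u^{(k)}$ again produces a contradicting diagonal sequence.
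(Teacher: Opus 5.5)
Your diagonal argument is correct and follows the paper's route: the paper only asserts that Theorem~\ref{th:MLE-Bahadur} is a consequence of Theorem~\ref{th:MLE}. Building a piecewise-constant sequence $u_n$ with $u_n\to0$ and $nu_n^2\to\infty$ is the standard way to make that deduction rigorous for both bounds in \eqref{eq:24} and for \eqref{eq:25}--\eqref{eq:27}. On the hypotheses, only your second option is available: C1 (which demands the polynomial rate $\omega(u)<Cu^\lambda$) and C2 do not follow from finite Fisher information, A0 and B1, so they must be read as implicit assumptions, which matches the paper's remark in Section~\ref{sec:setting} that proofs under B1 require C1 and C2 in addition.
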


For the moderate-deviation problem the relations~\eqref{eq:21}--\eqref{eq:23} and~\eqref{eq:25}--\eqref{eq:27} show the equivalence of the likelihood-ratio, Wald and Rao criteria for multidimensional testing of hypotheses about the value of a parameter.

In Theorems~\ref{th:MLE} and~\ref{th:MLE-Bahadur} the set $\Theta$ is assumed bounded. In Theorems~5.1 and~5.4 of Chapter~1 of~\cite{10} inequalities for moderate-deviation probabilities of maximum-likelihood estimators were obtained when the parameter set is unbounded. These inequalities allow one to extend the proofs of Theorems~\ref{th:MLE} and~\ref{th:MLE-Bahadur} to the case of an unbounded set $\Theta$. Thus relations~\eqref{eq:18}--\eqref{eq:27} remain valid for unbounded domains $\Theta$ if in addition conditions~(1)--(3) of Theorem~5.4 of Chapter~1 of~\cite{10} are satisfied. It is easy to see that they can be replaced by the following simpler sufficient conditions.

There exists $\gamma>0$ such that for any compact $K\subset\Theta$ there is $c=c(K)$ with
\begin{equation}
\sup_{|h|>R}\int_S f^{1/2}(x,\theta)f^{1/2}(x,\theta+h)\,d\nu\le c R^{-\gamma},\qquad\theta\in K.
\label{eq:28}
\end{equation}
In addition, for some $m>0$ the following conditions must hold:
\begin{equation}
\begin{gathered}
\sup_{|\theta|<R}\mathbb{E}_\theta|\phi_\theta(X_1)|^2<C(1+R^m),\\
\inf_{\theta\in\Theta}\mathbb{E}_\theta\phi_\theta^2(X_1)>c>0,\\
\sup_{|\theta|<R}\sup_h\omega_\theta(h)<C(1+R^m).
\end{gathered}
\label{eq:29}
\end{equation}

Theorem~5.4 of Chapter~1 of~\cite{10} is stated and proved for $\Theta\subset\mathbb{R}$. However it remains valid for $\Theta\subset\mathbb{R}^d$ if condition D is assumed in addition. In the multidimensional case the proofs differ only in the estimate of the modulus of continuity of the likelihood ratio, which is provided by condition D.

\subsection{Bayesian estimator}

Let $l_1:\mathbb{R}_+\to\mathbb{R}_+$ be an increasing function such that $l_1(0)=0$ and $l_1(x)>0$ if $x\neq 0$. Suppose that $l_1$ satisfies the following condition: there exist positive constants $C_1$, $C_2$ and $\kappa_1$, $\kappa_2$ such that for any $a>1$ and $x>0$
\begin{equation}
l_1(ax)\le C_1 a^{\kappa_1}l_1(x),\qquad
C_2(a-1)^{\kappa_2}<\frac{l_1(ax)-l_1(x)}{l_1(x)}.
\label{eq:30}
\end{equation}
The loss function $l(u)$ has the form $l(u)=l_1(\|u\|)$, where $\|u\|$ is some (not necessarily Euclidean) norm of the vector $u\in\mathbb{R}^d$.

We assume that the Bayesian prior density $\pi(\theta)$, $\theta\in\Theta$, is continuous, bounded and $\pi(\theta)>0$ for all $\theta\in\Theta$.

The Bayesian estimator of the parameter $\theta$ is defined by
\begin{equation}
\tilde\theta_n=\arg\min_{t\in\Theta}\int_\Theta l(t-\theta)\,q_n(\theta)\,d\theta,
\label{eq:Bayes}
\end{equation}
where
\begin{equation}
q_n(\theta)=\frac{\prod_{j=1}^n f(X_j,\theta)\,\pi(\theta)}{\int_\Theta\prod_{j=1}^n f(X_j,\theta)\,\pi(\theta)\,d\theta}
\label{eq:posterior}
\end{equation}
is the posterior density.

The statements of the following Theorems~\ref{th:Bayes} and~\ref{th:Bayes-Bahadur} differ from those of Theorems~\ref{th:MLE} and~\ref{th:MLE-Bahadur} essentially only by the absence of condition D and by the replacement of the maximum-likelihood estimator $\hat\theta_n$ by the Bayesian estimator $\tilde\theta_n$.

\begin{theorem}\label{th:Bayes}
Let the statistical experiment have finite Fisher information at the point $\theta_0\in\Theta\subset\mathbb{R}^d$ and let conditions A0 and B2 be satisfied. Then for any open set $\Omega\subset\mathbb{R}^d$ and any vector $b\in\mathbb{R}^d$ one has
\begin{equation}
\lim_{n\to\infty}(nu_n^2/2)^{-1}\log P_{\theta_0+u_nb}\bigl(I(\theta_0)^{1/2}(\tilde\theta_n-\theta_0-u_nb)\in u_n\Omega\bigr)=-\inf_{x\in\Omega}|x|^2.
\label{eq:33}
\end{equation}
For any $\delta>0$ and any vector $b\in\mathbb{R}^d$ one has
\begin{equation}
\lim_{n\to\infty}(nu_n^2/2)^{-1}\log P_{\theta_0+u_nb}\bigl(|I^{1/2}(\theta_0)(\tilde\theta_n-\theta_0)-2n^{-1/2}\psi_n|>\delta u_n\bigr)=-\infty,
\label{eq:34}
\end{equation}
\begin{equation}
\begin{aligned}
&\lim_{n\to\infty}(nu_n^2/2)^{-1}\log P_{\theta_0+u_nb}\Biggl(
\Biggl|n^{-1}\sum_{i=1}^n\xi_n(X_i,u_nb,\tilde\theta_n-\theta_0-u_nb)\\
&\qquad-2(\psi_n-n^{1/2}u_nb)^T(\psi_n-n^{1/2}u_nb)\Biggr|>\delta nu_n^2\Biggr)=-\infty
\end{aligned}
\label{eq:35}
\end{equation}
and
\begin{equation}
\begin{aligned}
&\lim_{n\to\infty}(nu_n^2/2)^{-1}\log P_{\theta_0+u_nb}\Biggl(
\Biggl|\sum_{i=1}^n\xi_n(X_i,u_nb,\tilde\theta_n-\theta_0-u_nb)\\
&\qquad-n(\tilde\theta_n-\theta_0-u_nb)^T I(\theta_0)(\tilde\theta_n-\theta_0-u_nb)/2\Biggr|>\delta nu_n^2\Biggr)=-\infty.
\end{aligned}
\label{eq:36}
\end{equation}
All the statements above remain valid if condition B2 is replaced by conditions B1, C1 and C2.
\end{theorem}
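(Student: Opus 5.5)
We follow the Ibragimov--Has'minskii scheme for Bayesian estimators (Chapter~1, \S5 of~\cite{10}) in the normalized parameter. Set $\theta=\theta_0+u_nb+u_nv$ and consider the normalized likelihood ratio
\[
Z_n(v)=\exp\Bigl\{\sum_{i=1}^n\xi_n(X_i,u_nb,u_nv)\Bigr\}.
\]
In the variable $t=\theta_0+u_nb+u_ns$, the Bayesian estimator minimizes $\int l_1(u_n\|s-v\|)Z_n(v)\pi(\theta_0+u_nb+u_nv)\,dv$.

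\textbf{Step 1 (tails of the posterior).} First we show that the posterior mass outside a ball $|v|<R$ is negligible on the moderate-deviation scale. Specifically, for every $M$ there is $R$ such that, with $P_{\theta_0+u_nb}$-probability at least $1-\exp\{-Mnu_n^2\}$,
\[
\int_{|v|>R}(1+|v|)^{\kappa_1}Z_n(v)\,dv\le e^{-Mnu_n^2}\int_{|v|<1}Z_n(v)\,dv .
\]
For the tail bound we use the Hellinger technique. The quantity $\mathbb{E}Z_n^{1/2}(v)=(1-\tfrac12 H^2)^n$ decays like $\exp\{-cn\min(u_n^2|v|^2,1)\}$. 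Near $\theta_0$ this follows from the finite Fisher information~\eqref{eq:FI} and the positive definiteness of $I(\theta_0)$; far from $\theta_0$ it follows from condition~A0. Markov's inequality applied to $\int_{|v|>R}Z_n^{1/2}$, together with the lower bound on $\int_{|v|<1}Z_n$, then gives the claim. The lower bound comes from~\eqref{eq:13} on a finite grid combined with $\zeta_n(u)\ge -C nu_n^2$ off an event of probability $e^{-Mnu_n^2}$ (Theorem~\ref{th:LD}). No condition~D is needed here, because we only need integrals of $Z_n$, not suprema. The moments in $v$ are controlled by the polynomial growth~\eqref{eq:30} of $l_1$.

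\textbf{Step 2 (quadratic approximation inside the ball).} On $|v|<R$ we replace $\log Z_n(v)$ by $\zeta_n(u_nv)$ shifted to the point $\theta_0+u_nb$. The first part of Theorem~\ref{th:LAN} gives only pointwise bounds~\eqref{eq:13}, which is why D and E are avoided. We therefore integrate: by Fubini and Markov,
\[
P\Bigl(\int_{|v|<R}\bigl|Z_n(v)-e^{\zeta_n(u_nv)}\bigr|\,dv>\epsilon_n\, e^{\sup\zeta_n}\Bigr)
\]
is bounded by the average over $v$ of the pointwise probabilities from~\eqref{eq:13}, which are $\le \exp\{-nu_n^2/\omega_2(u_n)\}$. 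The integrand on the bad set must be truncated using Step~1 type bounds. Then the posterior is, up to $o$ on the scale $nu_n^2$, Gaussian with center $n^{-1/2}u_n^{-1}I^{-1/2}\cdot 2\psi_n$ (suitably shifted by $b$) and covariance $(nu_n^2 I)^{-1}$. The continuity of $\pi$ lets us replace $\pi(\theta_0+u_nb+u_nv)$ by $\pi(\theta_0)$.

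\textbf{Step 3 (locating the minimizer).} For a symmetric Gaussian posterior, Anderson's lemma puts the minimizer of the posterior risk at the center. The strict increase in~\eqref{eq:30} makes this minimizer stable: if the risk functional is uniformly $\epsilon$-close, then the argmin lies within $\delta(\epsilon)$ of the center. This yields~\eqref{eq:34}. Relation~\eqref{eq:33} then follows from~\eqref{eq:34} and~\eqref{eq:16} of Theorem~\ref{th:LD} by the contraction principle (exponential equivalence). For~\eqref{eq:35}--\eqref{eq:36}, we apply~\eqref{eq:13} at the random point $\tilde\theta_n$. To do so, we discretize $|v|<R$ by a grid of mesh $\delta$ and bound the variation of $\sum\xi_n$ between grid points using the posterior concentration itself rather than condition~E. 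A cleaner alternative is to use $\zeta_n$ on the grid together with the monotone/quadratic structure to pass to $\tilde\theta_n$. Under B1, C1, C2 in place of B2 the same argument applies, since Theorem~\ref{th:LAN} holds under those conditions too.

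\textbf{Main obstacle.} We expect the main obstacle to be Step~2 together with the final plug-in for~\eqref{eq:35}--\eqref{eq:36}: obtaining uniform control of $Z_n$ without D and E, using only the integrated pointwise estimates. The first attempt would be to handle the bad set by the Cauchy--Schwarz inequality with $\mathbb{E}Z_n\le 1$, so that the exceptional probability $e^{-nu_n^2/\omega_2}$ enters as a square root, which still decays superexponentially on the scale $nu_n^2$.
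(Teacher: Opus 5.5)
Your architecture (cut off the posterior tails, replace $\log Z_n$ by $\zeta_n$ near the centre, localize the minimizer, transfer the LDP of $\psi_n$) matches the paper's. The gap is that the device letting one work with integrals of $Z_n$ without D and E is missing, and your substitute controls the wrong quantity. Markov's inequality for $\int_{|v|>R}Z_n^{1/2}$ in Step~1, and Cauchy--Schwarz with $\mathbb{E}Z_n\le 1$ on the bad set, only bound integrals of $Z_n^{1/2}$. Passing to $\int Z_n$ would need control of $\sup Z_n^{1/2}$, which for $d>1$ is exactly the modulus-of-continuity estimate requiring D. The bound $\mathbb{E}Z_n\le 1$ cannot help, because $\mathbb{E}_{\theta_0}[Z_n(v)\mathbf{1}_B]=P_{\theta_0+u_nv}(B)$ need not be small when $P_{\theta_0}(B)$ is. Cauchy--Schwarz would instead need a bound on $\mathbb{E}_{\theta_0}Z_n^2(v)$, to be extracted from B2, and that still does nothing for the tail. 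Likewise, values of $Z_n$ on a finite grid give no lower bound for $\int_{|v|<1}Z_n$. The paper's device is \eqref{eq:100}: Cauchy--Schwarz on $(Z_n^{1/2}(u)-Z_n^{1/2}(v))(Z_n^{1/2}(u)+Z_n^{1/2}(v))$ plus Hellinger continuity gives $\mathbb{E}|Z_n(u)-Z_n(v)|\le Cn|u-v|$. With a net of exponentially small mesh $\delta_n=\exp\{-nu_n^2/\omega_3(u_n)\}$, every integral of $Z_n$ over a union of cells becomes a Riemann sum over net points, with an error whose probability is superexponentially small by Markov. Then \eqref{eq:13} is applied at the net points with a union bound, which is affordable because $\omega_3/\omega_2\to\infty$. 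The paper imports the tail estimates \eqref{eq:93}--\eqref{eq:96} from Ibragimov--Has'minskii. The same net argument, using $\mathbb{E}Z_n^{1/2}(u)\le\exp\{-cn\min(|u|^2,1)\}$ at the net points, would also give them directly.

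Second, Step~3 relies on a Gaussian shape that \eqref{eq:13} does not deliver. The error $nu_n^2\omega_1(u_n)$ there is only $o(nu_n^2)$ and in general diverges; already in smooth models the neglected terms are of order $nu_n^3$ at the relevant probability level, which is unbounded when $u_n\gg n^{-1/3}$. Consequences:
(i) your claim $\int|Z_n-e^{\zeta_n}|\le\epsilon_n e^{\sup\zeta_n}$ with $\epsilon_n\to0$ fails even on the good set;
(ii) the posterior is not close to a normal law with covariance $(nu_n^2I)^{-1}$ at the scale of that covariance;
(iii) the posterior risk is known only up to factors $e^{\pm o(nu_n^2)}$.
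By the first inequality in \eqref{eq:30}, the Gaussian risk at distance $\delta u_n$ from the centre exceeds the risk at the centre by at most a factor polynomial in $nu_n^2$. So Anderson's lemma plus closeness of risk functionals cannot locate the minimizer.

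What survives on the exponential scale is concentration, and the paper exploits it in \eqref{eq:126}--\eqref{eq:128}. Let $U$ be the $2\delta u_n$-neighbourhood of a net point $x_{ni}$ next to the centre and $V$ its complement; then $\int_V Z_n\le e^{-c\delta^2nu_n^2}\int_U Z_n$. The risk at any $t$ at distance at least $5\delta u_n$ from $x_{ni}$ is at least $l_1(3\delta u_n)\int_U Z_n$. The risk at $x_{ni}$ is at most $l_1(2\delta u_n)\int_U Z_n$ plus an exponentially negligible contribution from $V$. Both inequalities of \eqref{eq:30} then make the first larger.

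Once \eqref{eq:34} is proved this way, your derivation of both bounds in \eqref{eq:33} by exponential equivalence with $2n^{-1/2}\psi_n$ and \eqref{eq:16} is a legitimate, self-contained alternative to the paper's lower bound via $u_n$-consistency and the Bahadur-type bound. For \eqref{eq:35}--\eqref{eq:36}, however, your sketch gives no mechanism: posterior concentration controls integrals of $Z_n$, not the value of $\log Z_n$ at the data-dependent point $\tilde\theta_n$.
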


\begin{theorem}\label{th:Bayes-Bahadur}
Let the statistical experiment have finite Fisher information at the point $\theta_0\in\Theta\subset\mathbb{R}^d$ and let conditions A0 and B1 be satisfied. Then for any open set $\Omega\subset\mathbb{R}^d$ one has
\begin{equation}
\lim_{u\to 0}\lim_{n\to\infty}(nu^2/2)^{-1}\log P_{\theta_0}\bigl(I(\theta_0)^{1/2}(\tilde\theta_n-\theta_0)\in u\Omega\bigr)=-\inf_{x\in\Omega}|x|^2.
\label{eq:37}
\end{equation}
For any $\delta>0$ one has
\begin{equation}
\lim_{u\to 0}\lim_{n\to\infty}(nu^2/2)^{-1}\log P_{\theta_0}\bigl(|I^{1/2}(\theta_0)(\tilde\theta_n-\theta_0)-2n^{-1/2}\psi_n|>\delta u\bigr)=-\infty,
\label{eq:38}
\end{equation}
\begin{equation}
\lim_{u\to 0}\lim_{n\to\infty}(nu^2/2)^{-1}\log P_{\theta_0}\Biggl(
\Biggl|n^{-1}\sum_{i=1}^n\xi_n(X_i,\tilde\theta_n-\theta_0)-2\psi_n^T\psi_n\Biggr|>\delta nu^2\Biggr)=-\infty
\label{eq:39}
\end{equation}
and
\begin{equation}
\lim_{u\to 0}\lim_{n\to\infty}(nu^2/2)^{-1}\log P_{\theta_0}\Biggl(
\Biggl|\sum_{i=1}^n\xi_n(X_i,\tilde\theta_n-\theta_0)
-n(\tilde\theta_n-\theta_0)^T I(\theta_0)(\tilde\theta_n-\theta_0)/2\Biggr|>\delta nu^2\Biggr)=-\infty.
\label{eq:40}
\end{equation}
\end{theorem}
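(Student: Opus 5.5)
The plan is to deduce Theorem~\ref{th:Bayes-Bahadur} from Theorem~\ref{th:Bayes} by a diagonal argument, exactly as Theorem~\ref{th:MLE-Bahadur} is obtained from Theorem~\ref{th:MLE}. Suppose \eqref{eq:37} fails. Then there exist $\epsilon>0$, a sequence $u^{(k)}\to 0$, and for each $k$ infinitely many $n$ with
\[
\bigl|(n(u^{(k)})^2/2)^{-1}\log P_{\theta_0}\bigl(I^{1/2}(\theta_0)(\tilde\theta_n-\theta_0)\in u^{(k)}\Omega\bigr)+\inf_{x\in\Omega}|x|^2\bigr|>\epsilon .
\]
(If the inner limit in $n$ does not exist, I apply the argument separately to $\limsup$ and $\liminf$.) Choose $n_k$ increasing so fast that $n_k(u^{(k)})^2\ge k$, and put $u_n=u^{(k)}$ for $n_{k}\le n<n_{k+1}$. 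Then $u_n\to 0$ and $nu_n^2\to\infty$. Along the subsequence $n_k$ the violating inequality holds, which contradicts \eqref{eq:33} with $b=0$ applied to this sequence $u_n$.

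The point to check is that the hypotheses of Theorem~\ref{th:Bayes} hold for the constructed $u_n$. Under B1 with C1 and C2, Theorem~\ref{th:Bayes} is valid. Condition B1 is stated for a given sequence $u_n$ with some $\gamma_n(u_n)\to\infty$. I will read B1 in the Bahadur setting as uniform in $u$: for each fixed small $u$ there is $\gamma(u)\to\infty$ as $u\to0$ such that \eqref{eq:B1} holds for $|\tau|<C_1u$. Then $\gamma_n=\gamma(u_n)\to\infty$ along the piecewise-constant sequence. Condition B1 is monotone in the radius, so shrinking $u$ only helps. C1 and C2 do not depend on $u_n$.

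Here the main obstacle appears. Theorem~\ref{th:Bayes-Bahadur} assumes only A0 and B1, whereas Theorem~\ref{th:Bayes} under B1 also needs C1 and C2. The difference is that in the Bahadur regime $u$ is fixed while $n\to\infty$, and only then $u\to 0$. The error terms that C1 and C2 control at a rate in $u_n$ (truncation of $\phi$, and the remainder $\omega$ in the Hellinger expansion) then need only be $o(1)$ as $u\to0$. This is supplied by finite Fisher information \eqref{eq:FI} and uniform integrability of $|\phi|^2$.

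So the hardest step is to re-run the proof of Theorem~\ref{th:Bayes} (through Theorem~\ref{th:LAN} and Theorem~\ref{th:LD}) with $u$ fixed. I would first try to show that each place where C1 or C2 enters produces an error of order $nu^2\,o_u(1)$ in the exponent, which disappears after dividing by $nu^2$ and letting $u\to0$. Concretely:
\begin{itemize}
\item the truncated mean $\mathbb{E}\phi_\varepsilon$ is $o(1/u)\cdot o(u)$-small by uniform integrability;
\item $\mathbb{E}g^2$ is replaced by $\tfrac14\tau^TI\tau(1+O(\omega(|\tau|)))$.
\end{itemize}

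Relations \eqref{eq:38}--\eqref{eq:40} then follow by the same diagonal argument from \eqref{eq:34}--\eqref{eq:36} with $b=0$. This works because the limit $-\infty$ is preserved along any sequence: for each $M$ the inner bound $<-M$ eventually holds.
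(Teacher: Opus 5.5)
\textbf{The gap: you drop C1 and C2, and your justification for this fails.}

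Your diagonal reduction to Theorem~\ref{th:Bayes} with $b=0$ (piecewise-constant $u_n$, $\gamma_n=\gamma(u_n)$) is sound as a reduction. It is what the paper means by presenting the Bahadur-type statements as consequences of the moderate-deviation theorems. The problem is the step you flag yourself. Under B1, Theorem~\ref{th:Bayes} also needs C1 and C2, and your reason for discarding them at fixed $u$ does not hold.

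The items you list are the smallness of $\mathbb{E}\phi_\varepsilon$ and the expansion of $\mathbb{E}g^2$ with an $o(1)$ remainder. Both already follow from finite Fisher information, cf.\ \eqref{eq:49}--\eqref{eq:50}, so this is not where C1 and C2 are used. They are used in the proof of \eqref{eq:13} under B1 to balance the two parts of the truncation:
\begin{itemize}
\item The part with $|\xi_n|>\varepsilon$ costs an additive $Cn\exp\{-\gamma_n\varepsilon/2\}$ in the exponent. This is negligible against $\gamma_n\omega_1(u_n)nu_n^2$ only if $\gamma_n\varepsilon\gtrsim|\log u_n|$. With $\gamma_n\to\infty$ arbitrarily slowly, this forces a truncation level $\varepsilon_n\approx|\log u_n|/\lambda_n$ with $\lambda_n\ll\gamma_n$.
\item Prokhorov's inequality for the bounded part then gives only the exponent $(nu_n^2\omega_1/8\varepsilon_n)\log(\varepsilon_n\omega_1/\omega(u_n))$. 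This dominates $nu_n^2$ only when, roughly, $\gamma_n\omega_1|\log\omega(u_n)|\gg|\log u_n|$.
\item The polynomial rate $\omega(u)<Cu^\lambda$ supplied by C1 and C2 is exactly what secures this in \eqref{eq:64}.
\end{itemize}

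\textbf{Why fixing $u$ first does not help.} Letting $n\to\infty$ at fixed $u$ changes nothing, because $n$ factors out of both bounds. They become $\exp\{-nu^2[\gamma(u)\omega_1/4-Cu^{-2}e^{-\gamma(u)\varepsilon/2}]\}$ and $\exp\{-nu^2(\omega_1/8\varepsilon)\log(\varepsilon\omega_1/\omega(u))\}$. The coefficients of $nu^2$ must still tend to $+\infty$ as $u\to0$.

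What you need is not that some error is $o(nu^2)$. You need the probability that the approximation fails to be $\exp\{-nu^2M(u)\}$ with $M(u)\to\infty$. That requires $|\log u|/\gamma(u)\lesssim\varepsilon(u)\ll\omega_1|\log\omega(u)|$. Rates such as $\gamma(u)=|\log u|^{1/2}$ and $\omega(u)=|\log u|^{-1}$ are compatible with B1 and \eqref{eq:FI}, yet admit no such $\varepsilon(u)$. So the B2 versus B1+C1+C2 dichotomy is a per-observation balance, not an artefact of coupling $u_n$ with $n$.

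\textbf{What your argument actually proves.} As written, it yields \eqref{eq:37}--\eqref{eq:40} only under A0, B1, C1, C2, or under B1 with $\gamma(u)\ge c|\log u|$ (that is, B2). Dropping C1 and C2 would need a genuinely different way to control large values of the log-likelihood ratio, and the proposal does not supply one. The paper does not spell one out either: it simply calls these statements consequences of the moderate-deviation theorems.
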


The inequalities of Theorems~5.2 and~5.6 of Chapter~1 of~\cite{10} allow one to extend Theorems~\ref{th:Bayes} and~\ref{th:Bayes-Bahadur} to the case when the set $\Theta\subset\mathbb{R}^d$ is unbounded. Theorems~\ref{th:Bayes} and~\ref{th:Bayes-Bahadur} remain valid for an unbounded set $\Theta\subset\mathbb{R}^d$ if in addition conditions~(1)--(3) of Theorem~5.6 of Chapter~1 of~\cite{10} are satisfied. Sufficient conditions for their fulfilment are the conditions~\eqref{eq:28} and~\eqref{eq:29}.

Theorem~5.6 of Chapter~1 of~\cite{10} is proved for the case $l(u)=l_1(|u|)$. However the proof extends essentially without changes to the case $l(u)=l_1(\|u\|)$. We omit the corresponding arguments.

\subsection{On moderate-deviation probabilities of the concentration of the posterior Bayesian measure}

For any measurable set $A\subset\Theta$ the posterior Bayesian measure of the set $A$ is equal to
\begin{equation}
Q_n(A)=\int_A q_n(\theta)\,d\theta.
\label{eq:Qn}
\end{equation}

\begin{theorem}\label{th:posterior}
Let the statistical experiment have finite Fisher information at the point $\theta_0\in\Theta\subset\mathbb{R}^d$ and let conditions A0 and B2 be satisfied. Then for any open set $\Omega\subset\mathbb{R}^d$, for any constant $c$, $0<c<1$, and any vector $b\in\mathbb{R}^d$ one has
\begin{equation}
\lim_{n\to\infty}(nu_n^2/2)^{-1}\log P_{\theta_0+u_nb}\bigl(Q_n(\theta_0+u_n I(\theta_0)^{-1/2}\Omega-u_nb)>c\bigr)=-\inf_{x\in\Omega}|x|^2.
\label{eq:42}
\end{equation}
Relation~\eqref{eq:42} remains valid if condition B2 is replaced by conditions B1, C1 and C2.
\end{theorem}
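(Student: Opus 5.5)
We reduce the statement to the large deviation principle for $\psi_n$ in Theorem~\ref{th:LD}, using the local uniform approximation of the log-likelihood ratio from Theorem~\ref{th:LAN}. Set $\theta_b=\theta_0+u_nb$ and reparametrize $\theta=\theta_b+u_n I(\theta_0)^{-1/2}v$. By Theorem~\ref{th:LAN}, the posterior density in the variable $v$ is, up to normalization and outside an event of probability $\exp\{-nu_n^2/\omega(u_n)\}$, equal to
\[
\exp\bigl\{nu_n^2\bigl(2 v^T w_n - |v|^2/2 + o(1)\bigr)\bigr\}\,\pi(\theta),
\]
uniformly over $|v|<C$. Here $w_n=(n^{1/2}u_n)^{-1}(\psi_n-n^{1/2}u_nb\text{-shift})$ is the suitably centred and normalized score. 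Completing the square, the posterior mass of $\{|v-2w_n|>\delta\}$ within $|v|<C$ is at most $\exp\{-cnu_n^2\delta^2\}$.

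\textbf{Step 1: tails of the posterior.} We must show that the posterior mass of $\{|\theta-\theta_b|>Cu_n\}$ is exponentially negligible at rate faster than $nu_n^2 C'$, with $C'$ growing in $C$. This is the main obstacle, since condition D is absent and the sup-version \eqref{eq:14} is unavailable.

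We would follow the Ibragimov--Has'minskii approach (Chapter~1 of~\cite{10}). First, bound $\mathbb{E}_{\theta_b} Z_n^{1/2}(v)$, where $Z_n$ is the likelihood ratio. By the Hellinger identity and condition A0 together with finite Fisher information,
\[
\mathbb{E}_{\theta_b} Z_n^{1/2}(v)\le \exp\{-cn\min(u_n^2|v|^2,1)\}.
\]
Second, integrate over $v$ and apply Chebyshev to the integral $\int_{|v|>C}Z_n^{1/2}\pi\,dv$. This requires no uniformity in $v$. Third, lower-bound the normalizing integral $\int Z_n\pi$ by its integral over a small ball $|v-2w_n|<\eta$. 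Using \eqref{eq:13} at finitely many grid points together with Fubini/Chebyshev on the set of $v$ where the approximation fails, this lower bound is $\exp\{nu_n^2(2|w_n|^2-\eta')\}$, outside a negligible event.

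The comparison $Z_n\le (Z_n^{1/2})^2$ is combined with the bound on $|w_n|$ as follows. We restrict to the event $|w_n|<M$, whose complement has probability $\exp\{-2M^2nu_n^2(1+o(1))\}$ by Theorem~\ref{th:LD}. On this event the ratio of the tail integral to the normalizing integral is small once $C\gg M$.

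\textbf{Step 2: reduction to $w_n$.} By Step~1, outside an event of probability $\exp\{-nu_n^2 K\}$ with $K$ arbitrary, $Q_n$ restricted to $v$ is within total variation $o(1)$ of the Gaussian $N(2w_n,(nu_n^2)^{-1}\cdot\frac14 I)$-like measure. Since $nu_n^2\to\infty$, this measure concentrates at the point $2w_n$, which is the normalized analogue of $2n^{-1/2}\psi_n$. Thus, for $c\in(0,1)$ and $\Omega$ open, $\{Q_n(\ldots\Omega\ldots)>c\}$ is sandwiched between $\{2w_n\in\Omega_{-\delta}\}$ and $\{2w_n\in\overline{\Omega_{+\delta}}\}$, up to negligible events. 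Here $\Omega_{-\delta}$ denotes the points at distance more than $\delta$ from $\Omega^c$, and $\Omega_{+\delta}$ the $\delta$-neighbourhood.

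\textbf{Step 3: applying the LDP.} Apply \eqref{eq:16} (and its closed-set upper-bound counterpart, obtained from the same Theorem~\ref{th:LD} by taking complements of open sets) to $2w_n$. This gives bounds $-\inf_{\Omega_{\pm\delta}}|x|^2$ with the normalization $(nu_n^2/2)^{-1}$; the factor $2$ accounts for the scaling between $\psi_n$ and the estimator. Letting $\delta\to0$ and using openness of $\Omega$ (for the lower bound) completes the proof.

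Under B1, C1 and C2 the same argument works, because Theorem~\ref{th:LAN} is stated under those conditions as well.
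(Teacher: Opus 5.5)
There is a genuine gap, and it is where you yourself locate the difficulty. Your opening paragraph asserts that, outside a negligible event, the posterior density equals $\exp\{nu_n^2(2v^Tw_n-|v|^2/2+o(1))\}\pi$ \emph{uniformly} over $|v|<C$. That is the sup-approximation \eqref{eq:14}, which Theorem~\ref{th:LAN} gives only under conditions D and E, and neither is assumed here.

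You patch this only for the lower bound on the normalizing integral. The Fubini/Chebyshev argument built on \eqref{eq:13} shows that the set of $v$ where the approximation fails has superexponentially small Lebesgue measure. That is enough to bound an integral such as $\int_B Z_n\pi$ from \emph{below}. It says nothing about $\int Z_n\pi$ over that exceptional set, where $Z_n$ may be huge.

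Both directions of \eqref{eq:42} need \emph{upper} bounds on $\int Z_n\pi$ over regions like $\{|v-2w_n|>\delta,\ |v|<C\}$:
\begin{itemize}
\item for the lower bound, to get $Q_n>c$ on the event $2w_n\in B(e,\delta)$;
\item for the upper bound, to get $Q_n\le c$ when $2w_n$ is away from $\Omega$.
\end{itemize}
So your Step~2 claim of total-variation closeness to a Gaussian is unsupported.

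Step~1 has the same defect. A Chebyshev bound for $\int_{|v|>C}Z_n^{1/2}\pi\,dv$ does not control $\int_{|v|>C}Z_n\pi\,dv$ without a supremum bound on $Z_n^{1/2}$; the remark $Z_n\le(Z_n^{1/2})^2$ is an identity and gives nothing. Such a sup bound is what chaining provides, and for $d>1$ that again needs condition D (cf.~\eqref{eq:75}).

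The paper avoids sup-norm control altogether. It uses the $L^1$ continuity estimate \eqref{eq:100}, $\mathbb{E}|Z_n(u)-Z_n(v)|\le Cn|u-v|$, obtained from Cauchy--Schwarz and the Hellinger bound on $\mathbb{E}(Z_n^{1/2}(u)-Z_n^{1/2}(v))^2$. With it, $\int_A Z_n\,du$ is replaced by a Riemann sum over an exponentially fine net, $\delta_n=\exp\{-nu_n^2/\omega_3(u_n)\}$, with expected error $O(n\delta_n u_n^d)$; see \eqref{eq:102} and \eqref{eq:107}.
\begin{itemize}
\item Markov's inequality makes this error negligible outside an event of superexponentially small probability, as in \eqref{eq:108} and \eqref{eq:124}.
\item \eqref{eq:13} holds simultaneously at all net points by a union bound, because $\omega_3/\omega_2\to\infty$ (the event $A_n$).
\end{itemize}
This gives two-sided bounds \eqref{eq:111}, \eqref{eq:118}, \eqref{eq:127}, \eqref{eq:128} on the posterior mass of any union of cells. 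The far tails $|u|>r_nu_n$ are taken from Ibragimov--Has'minskii's Bayesian bounds \eqref{eq:93}--\eqref{eq:95}, not derived from $\int Z_n^{1/2}$.

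If you put this device in place of your uniform approximation, the rest of your outline matches the paper's argument: sandwich $\{Q_n>c\}$ between events for $2w_n$ and apply Theorem~\ref{th:LD}.

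One smaller point: the closed-set upper bound in Step~3 does not follow from Theorem~\ref{th:LD} ``by taking complements''. Instead, apply \eqref{eq:16} to the open set $\Omega_{+2\delta}\supset\overline{\Omega_{+\delta}}$ and let $\delta\to0$.
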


The following Theorem~\ref{th:posterior-Bahadur} is a consequence of Theorem~\ref{th:posterior} for the setting of asymptotic efficiency in the Bahadur sense.

\begin{theorem}\label{th:posterior-Bahadur}
Let the statistical experiment have finite Fisher information at the point $\theta_0\in\Theta\subset\mathbb{R}^d$ and let conditions A0 and B1 be satisfied. Then for any open set $\Omega\subset\mathbb{R}^d$ and any constant $c$, $0<c<1$, one has
\begin{equation}
\lim_{u\to 0}\lim_{n\to\infty}(nu^2/2)^{-1}\log P_{\theta_0}\bigl(Q_n(\theta_0+u I(\theta_0)^{-1/2}\Omega)>c\bigr)=-\inf_{x\in\Omega}|x|^2.
\label{eq:43}
\end{equation}
\end{theorem}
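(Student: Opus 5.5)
We derive Theorem~\ref{th:posterior-Bahadur} from Theorem~\ref{th:posterior} by a diagonal argument, the same way Theorems~\ref{th:MLE-Bahadur} and~\ref{th:Bayes-Bahadur} follow from Theorems~\ref{th:MLE} and~\ref{th:Bayes}. Fix the open set $\Omega$, the constant $c\in(0,1)$ and $b=0$, and write
\[
a_n(u)=(nu^2/2)^{-1}\log P_{\theta_0}\bigl(Q_n(\theta_0+uI(\theta_0)^{-1/2}\Omega)>c\bigr).
\]
We must show $\lim_{u\to0}\limsup_n a_n(u)\le-\inf_\Omega|x|^2$ and $\lim_{u\to0}\liminf_n a_n(u)\ge-\inf_\Omega|x|^2$. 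We argue by contradiction. Suppose, say, the upper bound fails. Then there are $\eta>0$ and a sequence $u^{(k)}\to0$ with $\limsup_n a_n(u^{(k)})>-\inf_\Omega|x|^2+\eta$ for every $k$. For each $k$ we pick $n_k$ increasing, with $n_k(u^{(k)})^2\ge k$ and $a_{n_k}(u^{(k)})>-\inf_\Omega|x|^2+\eta$. We then define a sequence $u_n$ by setting $u_n=u^{(k)}$ for $n_{k}\le n<n_{k+1}$. This gives $u_n\to0$ and, after possibly passing to a faster growing subsequence, $nu_n^2\to\infty$.

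The next step is to check the hypotheses of Theorem~\ref{th:posterior} for this sequence $u_n$. We use the second variant of that theorem, with B2 replaced by B1, C1 and C2. The key observation is that in Theorem~\ref{th:posterior-Bahadur} the parameter $u$ is fixed while $n\to\infty$, so $u$ plays the role of $u_n$ only through a constant sequence in $n$. Condition B1 requires~\eqref{eq:B1} with some $\gamma_n\to\infty$ for $|\tau|<C_1u_n$. In the Bahadur setting we interpret B1 as holding uniformly for $|\tau|$ small, with $\gamma=\gamma(u)\to\infty$ as $u\to0$. Along our piecewise constant sequence this again gives $\gamma_n\to\infty$. Conditions C1 and C2 concern only $\omega$ and $\phi$, and do not depend on $n$. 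If C1 and C2 are not part of the hypotheses, we instead rely on the remark in Theorem~\ref{th:posterior} that B2 suffices. Applying Theorem~\ref{th:posterior} along the subsequence $n_k$ gives $a_{n_k}(u_{n_k})\to-\inf_\Omega|x|^2$, which contradicts the choice of $n_k$. The lower bound is handled symmetrically, using the sequence that realizes $\liminf$ below $-\inf_\Omega|x|^2-\eta$.

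The main obstacle is this hypothesis transfer. We must verify that the conditions stated for fixed $u$ yield conditions B1 (or B2), C1 and C2 for the diagonal sequence $u_n$, with constants uniform in $k$. A second point is that $\Theta$ and the LDP rate must behave uniformly: Theorem~\ref{th:posterior} has to apply to a sequence built by concatenating blocks rather than a monotone one. We would first check that nothing in the proof of Theorem~\ref{th:posterior} uses monotonicity of $u_n$, only $u_n\to0$ and $nu_n^2\to\infty$. If the C-conditions cannot be secured, the fallback is to repeat the proof of Theorem~\ref{th:posterior} directly. That proof rests on Theorem~\ref{th:LAN} and Theorem~\ref{th:LD}, with the error functions $\omega_1,\omega_2$ replaced by quantities that vanish as $u\to0$ after $n\to\infty$.
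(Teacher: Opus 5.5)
Your diagonal argument is logically sound for the $\limsup$/$\liminf$ form of the statement. Deriving the result from Theorem~\ref{th:posterior} is also what the paper intends; it gives no proof beyond calling Theorem~\ref{th:posterior-Bahadur} a consequence of Theorem~\ref{th:posterior}. However, the step you flag yourself is a genuine gap, and your proposed fixes do not close it. Theorem~\ref{th:posterior} applies to your piecewise-constant $u_n$ only under B2, or under B1 together with C1 and C2. Theorem~\ref{th:posterior-Bahadur} assumes only A0 and B1. C1 and C2 are not hypotheses. Your fallback of relying on the B2 version is not available either: B2 requires $\gamma_n>c|\log u_n|$, which is strictly more than $\gamma_n\to\infty$, and along your sequence $|\log u_n|=|\log u^{(k)}|\to\infty$. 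You read B1 as $\gamma=\gamma(u)\to\infty$ as $u\to0$, but nothing stops $\gamma(u)$ from growing much more slowly than $|\log u|$. So you get B1 along $u_n$, but neither B2 nor C1--C2, and the contradiction is never reached.

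The growth of $|\log u_n|$ really matters. In \eqref{qqur1} the term $Cn\exp\{-\gamma_n\varepsilon/2\}$ must be negligible against $\gamma_n nu_n^2\omega_1(u_n)$, i.e.\ $\exp\{-\gamma_n\varepsilon/2\}\lesssim\gamma_nu_n^2\omega_1(u_n)$, and this fails whenever $\gamma_n=o(|\log u_n|)$. Theorems~\ref{th:MLE-Bahadur}, \ref{th:Bayes-Bahadur} and~\ref{th:posterior-Bahadur} are nonetheless all stated under B1 alone, because there $u$ is held fixed while $n\to\infty$. Then $|\log u|$ is a constant, the same inequality holds as soon as $\gamma_n\to\infty$, and B1 acts as B2 at each fixed $u$. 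One then runs the B2 version of the argument for Theorem~\ref{th:posterior} with $u_n\equiv u$, and the error terms $\omega_i(u)$ need only vanish after $n\to\infty$ and then $u\to0$. Forcing $u_n\to0$ jointly with $n$ throws this advantage away and puts you back in the moderate-deviation zone.

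If you want to keep the diagonal route, the missing idea is to use your freedom in choosing $n_k$. Read B1 with the constant sequence $u_n\equiv u^{(1)}$; this is the reading under which B1 can replace B2 at fixed $u$. It then supplies one sequence $\gamma_n\to\infty$ valid for all $|\tau|<C_1u^{(1)}$, hence for every later $u^{(k)}\le u^{(1)}$. Take $n_k$ so large that $\inf_{n\ge n_k}\gamma_n>k|\log u^{(k)}|$. Then B2 holds along your piecewise-constant $u_n$, and the B2 version of Theorem~\ref{th:posterior} applies. Replacing $\gamma_n$ by a function of the scale $u$ alone, as you do, discards exactly this growth in $n$.
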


\section{Proof of Theorems~\ref{th:LAN} and~\ref{th:MLE}}
\label{sec:proofs}

In Subsection~\ref{sub:13} we prove the estimate~\eqref{eq:13} for the approximation of finite-dimensional distributions of the log-likelihood ratio. Then in Subsection~\ref{sub:uniform} we obtain estimates of the uniform approximation, which in particular complete the proof of Theorem~\ref{th:LAN}. In Subsection~\ref{sub:MLE} the proof of Theorem~\ref{th:MLE} will be given.

The proofs of all theorems will be carried out when the true parameter is $\theta_0$, not a sequence of parameters $\theta_0+u_nb$. With this course of reasoning the proofs remain valid for the sequence of parameters $\theta_0+u_nb$ if we replace the function $\phi_{\theta_0}$ by the function $\phi_{\theta_0+u_nb}$. The statement of the lemma below shows that if in the proofs for the parameters $\theta_0+u_nb$ we leave the function $\phi_{\theta_0}$ instead of $\phi_{\theta_0+u_nb}$, then they retain their force.

\begin{lemma}\label{lem:4.1}
Let the statistical experiment have finite Fisher information. Then for $0<|v|<C|u|$ one has
\begin{equation}
\mathbb{E}_{\theta_0+u}\bigl(g(X_1,u,u+v)-v^T\phi_\varepsilon(X_1)\bigr)^2<C|u|^2\omega(C_1|u|).
\end{equation}
\end{lemma}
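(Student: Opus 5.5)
The idea is to move everything to integrals with respect to $\nu$ of square roots of densities, so that the finite-Fisher-information bound \eqref{eq:FI} at $\theta_0$ can be applied twice: once at $\tau=u+v$ and once at $\tau=u$. Write $f_\tau=f(\cdot,\theta_0+\tau)$ and $g(\tau)=g(\cdot,\tau)$, so that $\sqrt{f_\tau}=\sqrt{f_0}\,(1+g(\tau))$. Since $g(x,u,u+v)=\sqrt{f_{u+v}/f_u}-1$, we have exactly
\[
\mathbb{E}_{\theta_0+u}\bigl(g(X_1,u,u+v)-v^T\phi_\varepsilon(X_1)\bigr)^2=\int_S\Bigl(\sqrt{f_{u+v}}-\sqrt{f_u}-v^T\phi_\varepsilon\sqrt{f_u}\Bigr)^2d\nu .
\]
Substituting $\sqrt{f_\tau}=\sqrt{f_0}(1+g(\tau))$, the integral becomes $\mathbb{E}_{\theta_0}\bigl[R^2\bigr]$, where
\[
R=\bigl(g(u+v)-(u+v)^T\phi\bigr)-\bigl(g(u)-u^T\phi\bigr)+v^T(\phi-\phi_\varepsilon)-v^T\phi_\varepsilon\,g(u).
\]

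By $(a_1+\dots+a_4)^2\le 4\sum a_i^2$ it suffices to bound the four terms separately.

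\emph{First two terms.} By \eqref{eq:FI} they are at most $|u+v|^2\omega(|u+v|)+|u|^2\omega(|u|)$. Since $|v|<C|u|$ and $\omega$ is increasing, this is $\le C|u|^2\omega(C_1|u|)$.

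\emph{Third term.} It equals $|v|^2\,\mathbb{E}_{\theta_0}\bigl[|\phi|^2\mathbf 1(|\phi|\ge\varepsilon u_n^{-1})\bigr]$. This tends to zero because $\mathbb{E}_{\theta_0}|\phi|^2<\infty$. In the regime where the lemma is used, $|u|\asymp u_n$, so it can be written as $|u|^2\tilde\omega(C_1|u|)$, with $\tilde\omega$ built from the tail function of $|\phi|^2$. I will then enlarge $\omega$ by this tail function, as the statement allows.

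\emph{Fourth term (main obstacle).} This is the cross term $\mathbb{E}_{\theta_0}\bigl[(v^T\phi_\varepsilon)^2g^2(u)\bigr]$, a product of two quantities each of order $|u|$ in $L_2$. A naive bound fails: $|\phi_\varepsilon|\le\varepsilon u_n^{-1}$ together with $\mathbb{E}g^2(u)\le C|u|^2$ only gives order $\varepsilon^2|u|^2$, which is small but not $o(|u|^2)$.

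I will first try the following split. On $\{|g(u)|\le\eta\}$ the term is at most $\eta^2|v|^2\,\mathbb{E}|\phi|^2$. On $\{|g(u)|>\eta\}$ I bound it by
\[
\varepsilon^2|v|^2u_n^{-2}\,\mathbb{E}_{\theta_0}\bigl[g^2(u)\mathbf 1(|g(u)|>\eta)\bigr].
\]
To estimate this, decompose $g(u)=u^T\phi+r(u)$, where $\mathbb{E}_{\theta_0}r^2(u)\le|u|^2\omega(|u|)$ by \eqref{eq:FI}. Then $\mathbb{E}\bigl[g^2(u)\mathbf 1(|g(u)|>\eta)\bigr]$ is bounded by
\[
C|u|^2\,\mathbb{E}\bigl[|\phi|^2\mathbf 1\bigl(|\phi|>\eta/(2|u|)\bigr)\bigr]+C|u|^2\omega(|u|),
\]
because $|g|>\eta$ forces either $|u^T\phi|>\eta/2$ or $|r|>\eta/2$. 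With $|u|\lesssim u_n$, the whole fourth term is then of order $|u|^2$ times a quantity vanishing as $|u|\to0$. Finally, choose $\eta=\eta(|u|)\to0$ slowly to balance the two pieces, and absorb all resulting vanishing factors into a single increasing function, still denoted $\omega(C_1|u|)$.
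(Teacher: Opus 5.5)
Your proposal is correct and follows the paper's route. Your $\sqrt{f}$ identity is the paper's multiplication by $(1+g(X_1,u))^2$: it reduces the quantity to $\mathbb{E}_{\theta_0}\bigl(g(u+v)-g(u)-v^T\phi_\varepsilon(1+g(u))\bigr)^2$, which is then split into the two Fisher-information remainders at $\tau=u+v$ and $\tau=u$ plus the cross term $v^T\phi_\varepsilon g(u)$.

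The differences are minor. The paper handles the cross term by writing $g(u)=u^T\phi_\varepsilon+(g(u)-u^T\phi_\varepsilon)$, which gives $C\varepsilon^2\mathbb{E}_{\theta_0}(g(u)-u^T\phi_\varepsilon)^2+C|v|^4\mathbb{E}_{\theta_0}|\phi_\varepsilon|^4$; the second piece is $o(|u|^2)$ only through the same uniform integrability of $|\phi|^2$ that you invoke. You instead split on $\{|g(u)|\le\eta\}$. The paper also leaves implicit the truncation error $v^T(\phi-\phi_\varepsilon)$, the regime $|u|\asymp u_n$, and the enlargement of $\omega$, all of which you make explicit.
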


\begin{proof}
Note that
\[
g(X_1,u,u+v)=\frac{g(X_1,u+v)+1}{g(X_1,u)+1}-1.
\]
Hence
\begin{align*}
&\mathbb{E}_{\theta_0+u}\bigl(g(X_1,u,u+v)-v^T\phi_\varepsilon(X_1)\bigr)^2\\
&=\mathbb{E}_{\theta_0}\Biggl[\Biggl(\frac{g(X_1,u+v)+1}{g(X_1,u)+1}-1-v^T\phi_\varepsilon(X_1)\Biggr)^2(1+g(X_1,u))^2\Biggr]\\
&=\mathbb{E}_{\theta_0}\bigl(g(X_1,u+v)+1-g(X_1,u)-1-v^T\phi_\varepsilon(X_1)(1+g(X_1,u))\bigr)^2\\
&\le C\mathbb{E}_{\theta_0}\bigl(g(X_1,u+v)-(u+v)^T\phi_\varepsilon(X_1)\bigr)^2+C\mathbb{E}_{\theta_0}\bigl(g(X_1,u)-u^T\phi_\varepsilon(X_1)\bigr)^2\\
&\qquad+C|v|^4\mathbb{E}_{\theta_0}|\phi_\varepsilon|^4+C\varepsilon^2\mathbb{E}_{\theta_0}\bigl(g(X_1,u)-u^T\phi_\varepsilon(X_1)\bigr)^2\\
&<C|u|^2\omega(C_1|u|).
\end{align*}
\end{proof}

\subsection{Proof of~\eqref{eq:13}}
\label{sub:13}

We begin with the proof of~\eqref{eq:13} under condition B2. Define the random variables
\[
\xi_{n\varepsilon}(X_i,u)=\xi_n(X_i,u)\mathbf{1}(|\xi_n(X_i,u)|<\varepsilon),\qquad
\bar\xi_{n\varepsilon}(X_i,u)=\xi_n(X_i,u)\mathbf{1}(|\xi_n(X_i,u)|>\varepsilon),\qquad 1\le i\le n.
\]
We have
\begin{equation}
J_n:=P\Biggl(\Biggl|\sum_{i=1}^n\xi_n(X_i,u)-\zeta_n(u)\Biggr|>nu_n^2\omega_1(u_n)\Biggr)\le J_{1n\varepsilon}+J_{2n\varepsilon},
\label{eq:44}
\end{equation}
where
\begin{equation}
J_{1n\varepsilon}=P\Biggl(\Biggl|\sum_{i=1}^n\xi_{n\varepsilon}(X_i,u)-\zeta_n(u)\Biggr|>\tfrac12 nu_n^2\omega_1(u_n)\Biggr)
\label{eq:45}
\end{equation}
and
\begin{equation}
J_{2n\varepsilon}=P\Biggl(\Biggl|\sum_{i=1}^n\bar\xi_{n\varepsilon}(X_i,u)\Biggr|>\tfrac12 nu_n^2\omega_1(u_n)\Biggr).
\label{eq:46}
\end{equation}

Estimate $J_{2n\varepsilon}$. We have
\begin{equation}\label{qqur1}
\begin{split} &
    P\Biggl(\sum_{i=1}^n\bar\xi_{n\varepsilon}(X_i,u)>\tfrac12 nu_n^2\omega_1(u_n)\Biggr)\\
&\le\exp\bigl\{-\gamma_n nu_n^2\omega_1(u_n)/4\bigr\}\bigl(\mathbb{E}\exp\{\gamma_n\bar\xi_{n\varepsilon}(X_1,u)/2\}\bigr)^n\\
&=\exp\bigl\{-\gamma_n nu_n^2\omega_1(u_n)/4\bigr\}\bigl(1+\mathbb{E}\bigl[(\exp\{\gamma_n\xi_n/2\}-1)\mathbf{1}(|\xi_n(X_1,u)|>\varepsilon)\bigr]\bigr)^n\\
&\le\exp\bigl\{-\gamma_n nu_n^2\omega_1(u_n)/4\bigr\}\\
&\qquad\times\bigl(1+\exp\{-\gamma_n\varepsilon/2\}\mathbb{E}\bigl[(\exp\{\gamma_n\xi_n(X_1,u)\}-\exp\{\gamma_n\xi_n(X_1,u)/2\})\mathbf{1}(|\xi_n|>\varepsilon)\bigr]\bigr)^n\\
&\le\exp\bigl\{-\gamma_n nu_n^2\omega_1(u_n)/4+Cn\exp\{-\gamma_n\varepsilon/2\}\bigr\}\\
&=\exp\bigl\{-\gamma_n nu_n^2\omega_1(u_n)/4\,(1+O(1))\bigr\},
\end{split}
\end{equation}
where we used condition B2 in the last equality.

Take $\gamma_{1n}=\gamma_n-2$. Denote
\[
\tilde\xi_{n\varepsilon}(X_1,u)=\log\frac{f(X_1,\theta_0)}{f(X_1,\theta_0+u)}\mathbf{1}\Bigl(\Bigl|\log\frac{f(X_1,\theta_0)}{f(X_1,\theta_0+u)}\Bigr|>\varepsilon\Bigr).
\]
Analogously to the previous estimate one obtains
\begin{equation}\label{qqur2}
\begin{split} 
&P\Biggl(\sum_{i=1}^n\bar\xi_{n\varepsilon}(X_i,u)<-\tfrac12 nu_n^2\omega_1(u_n)\Biggr)
=P\Biggl(\sum_{i=1}^n\tilde\xi_{n\varepsilon}(X_i,u)>\tfrac12 nu_n^2\omega_1(u_n)\Biggr)\\
&\le\exp\bigl\{-\gamma_{1n} nu_n^2\omega_1(u_n)/4\bigr\}\bigl(1+\mathbb{E}\bigl[(\exp\{\gamma_{1n}\tilde\xi_n\}-1)\mathbf{1}(|\tilde\xi_n|>\varepsilon)\bigr]\bigr)^n\\
&\le C\exp\bigl\{-\gamma_n nu_n^2\omega_1(u_n)/4\bigr\}\bigl(1+\mathbb{E}_{\theta_0+u}\bigl[(\exp\{\gamma_n\tilde\xi_n\}-1)\mathbf{1}(|\tilde\xi_n|>\varepsilon)\bigr]\bigr)^n\\
&\le\exp\bigl\{-\gamma_n nu_n^2\omega_1(u_n)/4+Cn\exp\{-\gamma_n\varepsilon/2\}\bigr\}\\
&=\exp\bigl\{-\gamma_n nu_n^2\omega_1(u_n)/4\,(1+o(1))\bigr\}.
\end{split}
\end{equation}

Estimate $J_{1n\varepsilon}$. From Lemma~3.4 in~\cite{3} the estimates
\begin{equation}
\mathbb{E}\bigl[\xi_{n\varepsilon}(X_1,u)-2u^T\phi_\varepsilon(X_1)+\tfrac12 u^T I(\theta_0)u\bigr]=O(u_n^2\omega(u_n))
\label{eq:49}
\end{equation}
and
\begin{equation}
\mathbb{E}\bigl[\xi_{n\varepsilon}(X_1,u)-2u^T\phi_\varepsilon(X_1)+\tfrac12 u^T I(\theta_0)u\bigr]^2=O(u_n^2\omega(u_n))
\label{eq:50}
\end{equation}
follow.

Since
\begin{equation}
|\xi_{n\varepsilon}(X_1,u)-2u^T\phi_\varepsilon(X_1)+\tfrac12 u^T I(\theta_0)u|\le 4\varepsilon
\label{eq:51}
\end{equation}
and relations~\eqref{eq:49},~\eqref{eq:50} hold, applying Prokhorov's inequality~\cite{15} we obtain
\begin{equation}
J_{1n\varepsilon}\le\exp\Biggl\{-\frac{nu_n^2\omega_1(u_n)}{8\varepsilon}\log\Biggl\{\frac{nu_n^2\cdot4\varepsilon}{Cnu_n^2\omega(u_n)}\Biggr\}\Biggr\}
=\exp\Biggl\{\frac{nu_n^2\omega_1(u_n)}{8\varepsilon}\log\frac{\omega(u_n)}{4\varepsilon}\Biggr\}.
\label{eq:52}
\end{equation}
Then we get
\begin{equation}
P\Biggl(\sum_{i=1}^n\xi_{n\varepsilon}(X_i,u)-\zeta_n(u)>nu_n^2\omega_1(u_n)\Biggr)<C\exp\{-nu_n^2/\omega_2(u_n)\},
\label{eq:53}
\end{equation}
provided we choose $\omega_1(u_n)$ so that $\gamma_n\omega_1(u_n)\to\infty$ and $\omega_1(u_n)|\log\omega(u_n)|\to\infty$ as $n\to\infty$, and set
\[
\omega_2(u_n)^{-1}=\min\bigl\{\gamma_n\omega_1(u_n),\ \omega_1(u_n)|\log\omega(u_n)|\bigr\}.
\]
The proof of the inequality
\begin{equation}
P\Biggl(\sum_{i=1}^n\xi_{n\varepsilon}(X_i,u)-\zeta_n(u)<-nu_n^2\omega_1(u_n)\Biggr)<C\exp\{-nu_n^2/\omega_2(u_n)\}
\label{eq:54}
\end{equation}
is analogous and is omitted. It is enough to note that
\begin{equation}
\operatorname{Var}_{\theta_0}\log\frac{f(X_i,\theta_0+u)}{f(X_i,\theta_0)}=\operatorname{Var}_{\theta_0}\log\frac{f(X_i,\theta_0)}{f(X_i,\theta_0+u)}.
\label{eq:55}
\end{equation}
From~\ref{qqur1},~\ref{qqur2} and~\eqref{eq:53},~\eqref{eq:54} relation~\eqref{eq:13} follows.

We now prove~\eqref{eq:13} under condition B1. Take a sequence $\lambda_n$ with $\lambda_n\to\infty$ as $n\to\infty$ such that $\varepsilon_n:=\lambda_n^{-1}|\log u_n|\to0$ and $\gamma_n/\lambda_n\to\infty$ as $n\to\infty$. Choose $\omega_1(u_n)\to0$ so that $\lambda_n\omega_1(u_n)\to\infty$ as $n\to\infty$.

We have
\begin{equation}
J_n\le J_{1n\varepsilon_n}+J_{2n\varepsilon_n}.
\label{eq:56}
\end{equation}
The analogues of the estimates~\ref{qqur1} and~\ref{qqur2} for the upper bound of $J_{2n\varepsilon_n}$ differ only slightly. We give only the analogue of~\ref{qqur1} in the fragment where any differences appear:
\begin{align}
J_{2n\varepsilon_n}
&\le\exp\{-\gamma_n nu_n^2\omega_1(u_n)/4\}\bigl(1+\exp\{-\gamma_n\varepsilon_n/2\}\notag\\
&\qquad\times\mathbb{E}\bigl[(\exp\{\gamma_n\xi_n(X_i,u)\}-\exp\{\gamma_n\xi_n(X_i,u)/2\})\mathbf{1}(|\xi_n(X_i,u)|>\varepsilon_n)\bigr]\bigr)^n\notag\\
&\le\exp\{-\gamma_n nu_n^2\omega_1(u_n)/4+Cn\exp\{-\gamma_n\varepsilon_n/2\}\}\notag\\
&=\exp\{-\gamma_n nu_n^2\omega_1(u_n)/4\,(1+O(1))\}.
\label{eq:57}
\end{align}
In order to estimate $J_{1n\varepsilon_n}$ by means of Prokhorov's inequality we need to show that
\begin{equation}
\mathbb{E}\bigl[\xi_{n\varepsilon_n}(X_1,u)-2u^T\phi_\varepsilon(X_1)+\tfrac12 u^T I(\theta_0)u\bigr]=O(u_n^2\omega(u_n))
\label{eq:58}
\end{equation}
and
\begin{equation}
\mathbb{E}\bigl[\xi_{n\varepsilon_n}(X_1,u)-2u^T\phi_\varepsilon(X_1)+\tfrac12 u^T I(\theta_0)u\bigr]^2=O(u_n^2\omega(u_n)).
\label{eq:59}
\end{equation}
To prove~\eqref{eq:58} and~\eqref{eq:59} it is sufficient to show that
\begin{equation}
\mathbb{E}\bigl[\bar\xi_{n\varepsilon}(X_1,u)\mathbf{1}(|\xi_n(X_1,u)|<\varepsilon_n)\bigr]=O(u_n^2\omega(u_n))
\label{eq:60}
\end{equation}
and
\begin{equation}
\mathbb{E}\bigl[\bar\xi_{n\varepsilon}^2(X_1,u)\mathbf{1}(|\xi_n(X_1,u)|<\varepsilon_n)\bigr]=O(u_n^2\omega(u_n)).
\label{eq:61}
\end{equation}
We have
\begin{equation}
\begin{split} &
\mathbb{E}\bigl[\bar\xi_{n\varepsilon}(X_1,u)\mathbf{1}(\xi_n(X_1,u)>\varepsilon)\bigr]\le\mathbb{E}\bigl[g_n(X_1,u)\mathbf{1}(g_n(X_1,u_n)>\varepsilon)\bigr] \\ &
\le\varepsilon^{-1}\mathbb{E}\bigl[g_n^2(X_1,u)\mathbf{1}(g_n(X_1,u_n)>\varepsilon)\bigr]=O(u_n^2\omega(u_n))
\label{eq:62}
\end{split}
\end{equation}
and
\begin{equation}
\mathbb{E}\bigl[\bar\xi_{n\varepsilon}^2(X_1,u)\mathbf{1}(\xi_n(X_1,u)>\varepsilon)\bigr]
\le\mathbb{E}\bigl[g_n^2(X_1,u)\mathbf{1}(g_n(X_1,u_n)>\varepsilon)\bigr]=O(u_n^2\omega(u_n)),
\label{eq:63}
\end{equation}
where the last equalities in~\eqref{eq:62} and~\eqref{eq:63} were proved in~(3.5) of~\cite{3}.

From~\eqref{eq:62},~\eqref{eq:63} and condition C1 relations~\eqref{eq:60} and~\eqref{eq:61} follow, and therefore also~\eqref{eq:58} and~\eqref{eq:59}. Using~\eqref{eq:58},~\eqref{eq:59} and applying Prokhorov's inequality we obtain
\begin{equation}
J_{1n\varepsilon_n}\le\exp\Biggl\{-\frac{nu_n^2\omega_1(u_n)}{8\varepsilon_n}\log\Biggl\{\frac{nu_n^2\cdot4\varepsilon_n}{Cnu_n^{2+\gamma}}\Biggr\}\Biggr\}
\le\exp\{-Cnu_n^2\omega_1(u_n)\lambda_n(1+o(1))\}.
\label{eq:64}
\end{equation}
From~\eqref{eq:57} and~\eqref{eq:64} relation~\eqref{eq:13} follows.

\subsection{Estimates of the rate of uniform approximation}
\label{sub:uniform}

Denote
\begin{equation}
Z_{n\theta}(u)=\prod_{j=1}^n\frac{f(X_j,\theta+u)}{f(X_j,\theta)}.
\label{eq:65}
\end{equation}
We obtain estimates of the moduli of continuity of $Z_n^{1/2}(u)$ and of the log-likelihood ratio.

We begin with the proof of~\eqref{eq:14}. On the set $[-Cu_n,Cu_n]^d$ take a $\delta_n$-net consisting of the points $u_{ni}$, $1\le i\le l_n$, where the value of $\delta_n$ will be defined later.

For the proof of~\eqref{eq:14} we consider two cases:
\begin{equation}
u_n>cn^{-1/2}\sqrt{\log n},\qquad c>1
\label{eq:66}
\end{equation}
and
\begin{equation}
u_n\le cn^{-1/2}\sqrt{\log n},\qquad c<1.
\label{eq:67}
\end{equation}

We start with the case~\eqref{eq:66}. Take a sequence $\omega_3(u_n)$ such that $\omega_3(u_n)\to0$ and $\omega_2(u_n)/\omega_3(u_n)\to\infty$ as $n\to\infty$. Put
\[
\delta_n=\exp\{-nu_n^2/\omega_3(u_n)\}.
\]
Define the event
\begin{equation}
A_n=\Biggl\{X_1,\dots,X_n:\max_{1\le i\le l_n}\Biggl|\sum_{i=1}^n\xi_n(X_i,u_{ni})-\zeta_n(u_{ni})\Biggr|<nu_n^2\omega_1(u_n)/2\Biggr\}.
\label{eq:68}
\end{equation}
We have
\begin{align}
P(\overline{A}_n)
&\le\sum_{i=1}^{l_n}P\Biggl(\Biggl|\sum_{i=1}^n\xi_n(X_i,u_{ni})-\zeta_n(u_{ni})\Biggr|>nu_n^2\omega_1(u_n)/2\Biggr)\notag\\
&\le l_n\exp\{-nu_n^2/\omega_2(u_n)\}\le\exp\{-nu_n^2(1/\omega_2(u_n)+C/\omega_3(u_n))\}\notag\\
&=\exp\{-nu_n^2/\omega_2(u_n)\,(1+o(1))\}.
\label{eq:69}
\end{align}
For the proof of~\eqref{eq:14} we use the following lemma (Lemma~3.1 of Chapter~3 in~\cite{10}).

\begin{lemma}\label{lem:4.2}
Let $\xi_1,\dots,\xi_n$ be independent identically distributed random variables with $\mathbb{E}\xi_1=0$ and $\mathbb{E}|\xi_1|^{2\beta}<\infty$, $\beta\ge1$. Then
\begin{equation}
\mathbb{E}\Biggl|\sum_{i=1}^n\xi_i\Biggr|^{2\beta}\le C_\beta n^\beta\mathbb{E}|\xi_1|^{2\beta}.
\label{eq:70}
\end{equation}
\end{lemma}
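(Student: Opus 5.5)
The plan is the classical symmetrization-plus-Rosenthal argument; the main work is the moment estimate for a symmetric sum. Set $S_n=\sum_{i=1}^n\xi_i$ and $m=2\beta$.

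First, I will reduce to symmetric summands. Let $\xi_1',\dots,\xi_n'$ be an independent copy of the sample, and put $\eta_i=\xi_i-\xi_i'$.

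\begin{enumerate}[label=(\roman*)]
\item Since $\mathbb{E}\xi_1'=0$, Jensen's inequality applied conditionally on $\xi_1,\dots,\xi_n$ gives $\mathbb{E}|S_n|^m\le\mathbb{E}\bigl|\sum_i\eta_i\bigr|^m$.
\item The $\eta_i$ are independent, identically distributed and symmetric, with $\mathbb{E}|\eta_1|^m\le2^m\mathbb{E}|\xi_1|^m$.
\item Let $\varepsilon_i$ be independent Rademacher signs, independent of the $\eta_i$. By symmetry $\sum_i\eta_i$ has the same law as $\sum_i\varepsilon_i\eta_i$.
\item Conditionally on the $\eta_i$, apply Khintchine's inequality:
\[
\mathbb{E}_\varepsilon\Bigl|\sum_i\varepsilon_i\eta_i\Bigr|^m\le B_m\Bigl(\sum_i\eta_i^2\Bigr)^{m/2}.
\]
\end{enumerate}

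Second, I will remove the square function. Because $m/2=\beta\ge1$, convexity of $t\mapsto t^\beta$ (the power-mean inequality) gives
\[
\Bigl(\sum_{i=1}^n\eta_i^2\Bigr)^{\beta}\le n^{\beta-1}\sum_{i=1}^n|\eta_i|^{2\beta}.
\]
Taking expectations yields
\[
\mathbb{E}\Bigl(\sum_i\eta_i^2\Bigr)^{\beta}\le n^{\beta}\,\mathbb{E}|\eta_1|^{2\beta}\le 2^{2\beta}n^\beta\,\mathbb{E}|\xi_1|^{2\beta}.
\]
Combining this with the first step proves \eqref{eq:70} with $C_\beta=2^{2\beta}B_{2\beta}$.

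The main obstacle is justifying Khintchine's inequality with a constant depending only on $m$; I would simply cite it. If a self-contained route is preferred for integer $\beta$, expand $\mathbb{E}\bigl(\sum\varepsilon_i\eta_i\bigr)^{2\beta}$ directly. Only terms with all indices appearing an even number of times survive. There are at most $C_\beta n^{k}$ such index patterns with $k\le\beta$ distinct indices. Each surviving term is bounded by $\mathbb{E}|\eta_1|^{2\beta}$ via Hölder's inequality.

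For non-integer $\beta$, the Khintchine route above already covers the general case. Alternatively, one can invoke Rosenthal's inequality
\[
\mathbb{E}|S_n|^m\le C_m\Bigl(n\,\mathbb{E}|\xi_1|^m+(n\,\mathbb{E}\xi_1^2)^{m/2}\Bigr),
\]
and then use $\mathbb{E}\xi_1^2\le(\mathbb{E}|\xi_1|^m)^{2/m}$ together with $n\le n^\beta$.
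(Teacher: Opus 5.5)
Your proof is correct. The conditional Jensen step (using $\mathbb{E}\xi_1'=0$ and convexity of $|x|^{2\beta}$) holds. So does the equality in law of $\sum_i\eta_i$ and $\sum_i\varepsilon_i\eta_i$, which relies on independence and symmetry of the $\eta_i$. Khintchine's inequality applied conditionally on the $\eta_i$ is valid, and so is the power-mean bound $(\sum_i\eta_i^2)^\beta\le n^{\beta-1}\sum_i|\eta_i|^{2\beta}$, which is exactly where $\beta\ge1$ is needed: for $\beta<1$ that inequality reverses. Together these give \eqref{eq:70} with the explicit constant $C_\beta=2^{2\beta}B_{2\beta}$.

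The paper gives no argument of its own. It simply quotes the inequality as Lemma~3.1 of Chapter~3 of~\cite{10}; it is the Marcinkiewicz--Zygmund moment inequality for i.i.d.\ centred summands. Your symmetrization--Khintchine route is a standard proof of that inequality. What it adds over the citation is self-containment (modulo Khintchine), an explicit constant, and a clear view of where each hypothesis enters: centring in the symmetrization step, identical distribution only when summing the moments, and $\beta\ge1$ in the power-mean step.

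Your fallback routes are also sound. In the integer case, each surviving term has $k\le\beta$ distinct indices and factors as $\prod_j\mathbb{E}|\eta_1|^{a_j}$ with $\sum_j a_j=2\beta$. Each factor is controlled by Lyapunov's inequality, so no Hölder argument across dependent factors is needed. The Rosenthal route is correct but derives the lemma from a strictly stronger theorem.
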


Hence, using condition E, we obtain
\begin{align}
&\mathbb{E}\Biggl|\sum_{i=1}^n\bigl(\log f(X_i,\theta_0+u)-\log f(X_i,\theta+v)-(u-v)^T\phi_\varepsilon(X_i)\bigr)\Biggr|^{\beta_1}\notag\\
&\le Cn^{\beta_1/2}\mathbb{E}\bigl(\xi_n(X_1,u,v-u)-(u-v)^T\phi_\varepsilon(X_1)\bigr)^{\beta_1}\le Cn^{\beta_1/2}|u-v|^{\beta_2}.
\label{eq:71}
\end{align}
Applying Theorem~19 of Appendix~1 in~\cite{10} we then obtain
\begin{align}
&P\Biggl(\sup_i\sup_{|u-u_{in}|<\delta_n}\Biggl|\sum_{s=1}^n\xi_n(X_s,u)-\zeta_n(u)\Biggr|>nu_n^2\omega_3(u_n)\Biggr)\notag\\
&\le(nu_n^2\omega_3(u_n))^{-1}\mathbb{E}\Biggl(\sup_i\Biggl|\sum_{s=1}^n\xi_n(X_s,u_{in})-\zeta_n(u_{in})\Biggr|\Biggr)\notag\\
&\quad+(nu_n^2\omega_3(u_n))^{-1}\mathbb{E}\Bigl[\sup_i\sup_{|u-u_{in}|<\delta_n}|\zeta_n(u)-\zeta_n(u_{in})|\Bigr]\notag\\
&\quad+(u_n^2\omega_3(u_n))^{-1}\delta_n\mathbb{E}|\phi_\varepsilon(X_1)|\notag\\
&\quad+(nu_n^2\omega_3(u_n))^{-1}\mathbb{E}\Biggl(\sup_i\sup_{|u-u_{in}|<\delta_n}\Biggl|\sum_{s=1}^n\bigl(\xi_n(X_s,u_{in},u-u_{in})+(u_{in}-u)\phi(X_s)\bigr)\Biggr|\Biggr)\notag\\
&\le C(nu_n^2\omega_3(u_n))^{-1}(\delta_n+\delta_n^{(\beta_2-d)/\beta}).
\label{eq:72}
\end{align}
From~\eqref{eq:69} and~\eqref{eq:72} we obtain~\eqref{eq:14} in the case~\eqref{eq:66}. In the case~\eqref{eq:67} the estimates for the proof of~\eqref{eq:14} are analogous and are omitted.

We now estimate the modulus of continuity of $Z_n^{1/2}(u)$ when $\Theta\subset\mathbb{R}$. Take a sequence $\omega_3(u_n)$ such that $\omega_3(u_n)\to0$ and $\omega_2(u_n)/\omega_3(u_n)\to\infty$ as $n\to\infty$. Put
\[
\delta_n=\exp\{-nu_n^2/\omega_3(u_n)\}.
\]
Using the estimate of Lemma~5.5 of Chapter~1 in~\cite{10} we obtain
\begin{equation}
\mathbb{E}_{\theta_0}\bigl(Z_n^{1/2}(u)-Z_n^{1/2}(v)\bigr)^2\le Cn|u-v|^2
\label{eq:73}
\end{equation}
for $|u|<C$, $|v|<C$ and $n>n_0$.

Take a sequence $\omega_4(u_n)$ such that $\omega_4(u_n)\to0$ and $\omega_4(u_n)/\omega_3(u_n)\to\infty$ as $n\to\infty$. In the case~\eqref{eq:66}, from~\eqref{eq:73} and Theorem~19 of the Appendix in~\cite{10} we obtain
\begin{align}
&P\Biggl(\sup_i\sup_{|u-u_{in}|<\delta_n}\bigl|Z_n^{1/2}(u)-Z_n^{1/2}(u_{in})\bigr|>\exp\{-nu_n^2/\omega_4(u_n)\}\Biggr)\notag\\
&\le\exp\{nu_n^2/\omega_4(u_n)\}\mathbb{E}\Bigl[\sup_i\sup_{|u-u_{in}|<\delta_n}\bigl|Z_n^{1/2}(u)-Z_n^{1/2}(u_{in})\bigr|\Bigr]\notag\\
&\le Cn^{1/2}\exp\{nu_n^2/\omega_4(u_n)\}\delta_n^{1/2}\le C\exp\Bigl\{-\frac{nu_n^2}{2\omega_3(u_n)}\Bigr\}.
\label{eq:74}
\end{align}

In the multidimensional case, for the proof of the analogue of inequality~\eqref{eq:74} one uses, instead of~\eqref{eq:73}, the inequality
\begin{equation}
\mathbb{E}_\theta\bigl(Z_{n\theta}^{1/m}(u)-Z_{n\theta}^{1/m}(v)\bigr)^m\le Cn^{m/2}|u-v|^m
\label{eq:75}
\end{equation}
(see Lemma~3.2 of Chapter~2 in~\cite{10}). It holds provided condition D is satisfied. The remaining estimates are analogous.

We obtain an estimate of the modulus of continuity of $Z_n^{1/2}(u)$ in the case~\eqref{eq:67} when $\Theta\subset\mathbb{R}$. Put $\delta_n=\alpha_n u_n$, where $\alpha_n=n^{-c_1}$ with $c_1>1/2+8c^2$. Then
\begin{equation}
P(\overline{A}_n)\le\exp\{-nu_n^2/\omega_2(u_n)-\log\alpha_n\}=\exp\{-nu_n^2/\omega_2(u_n)\,(1+o(1))\}.
\label{eq:76}
\end{equation}
Arguing as in~\eqref{eq:72} we obtain
\begin{equation}
\begin{split} &
P\Biggl(\sup_i\sup_{|u-u_{in}|<\delta_n}\bigl|Z_{n\theta}^{1/2}(u)-Z_{n\theta}^{1/2}(u_{in})\bigr|>\exp\{-2nu_n^2\}\Biggr)\\ &
\le Cn^{1/2}\alpha_n^{1/2}u_n^{1/2}\exp\{2nu_n^2\}=O\bigl(n^{1/4-c_1/2+4c^2}\log^{1/4}n\bigr).
\label{eq:77}
\end{split}
\end{equation}
Note that the estimates~\eqref{eq:76} and~\eqref{eq:77} are valid for any $c>1$ and $c_1>1/2+8c^2$. This allows one to prove their validity in the multidimensional case for $c>m/2$ and $c_1>m(m/2+d/4+2c^2)/(m-d)$, using condition D.

\subsection{Proof of Theorem~\ref{th:MLE}}
\label{sub:MLE}

Thus we have shown that the log-likelihood ratio, regarded as a random process, is well approximated by the random process $\zeta_n(u)$, $|u|<Cu_n$. Consequently the proof of the theorem is essentially reduced to the study of the point at which the maximum of the process $\zeta_n(u)$, $|u|<Cu_n$, is attained.

We say that an estimator $\tilde\theta_n$ is $u_n$-consistent if for every $\theta\in\Theta$ there exists a neighbourhood $U\subset\Theta$ of $\theta$ such that for every $\delta>0$
\[
\lim_{n\to\infty}\sup_{\theta\in U}P_\theta(|\tilde\theta_n-\theta|>\delta u_n)=0.
\]
If an estimator is $u_n$-consistent and the Fisher information is finite, then the lower bound of asymptotic efficiency in the Bahadur sense holds for it in the zone of moderate deviation probabilities (see Theorem~2.2 of~\cite{5}). This lower bound coincides with the lower bound of the moderate-deviation probabilities of the maximum-likelihood estimator in Theorem~\ref{th:MLE}. If the conditions of Theorem~3.2 of Chapter~2 in~\cite{10} are satisfied, then the maximum-likelihood estimator is asymptotically normal and therefore $u_n$-consistent. If the conditions of Theorem~\ref{th:MLE} are satisfied, then the conditions of Theorem~3.2 of Chapter~2 in~\cite{10} are also satisfied. Thus, to prove~\eqref{eq:18} it remains only to prove the upper bound.

Define the event
\begin{align*}
D_n=\bigl\{X_1,\dots,X_n:&\ |Z_n(u)-Z_n(u_{in})|\le C\exp\{-2nu_n^2\},\\
&\ \exp\{\zeta_n(u_{in})-nu_{in}^2\omega_1(u_n)\}<Z_n(u_{in})<\exp\{\zeta_n(u_{in})+u_{in}^2\omega_1(u_n)\},\\
&\ |u-u_{in}|<\delta_n,\ 1\le i\le l_n\bigr\}.
\end{align*}
From~\eqref{eq:69},~\eqref{eq:76} and~\eqref{eq:74},~\eqref{eq:77} it follows that
\begin{equation}
P(\overline{D}_n)\le C\exp\{-C_1 nu_n^2\},
\label{eq:78}
\end{equation}
where the constant $C_1$ can be taken arbitrarily large for a suitable $C=C(C_1)$.

To prove the upper bound we show that if $0<r<1$, then for the event
\[
\Upsilon_n(r)=\{X_1,\dots,X_n:\psi_n^T\psi_n<nr u_n^2\}
\]
one has
\begin{equation}
P\bigl(|I^{1/2}(\theta)(\hat\theta_n-\theta)|>u_n,\ D_n\cap\Upsilon_n(r)\bigr)=0.
\label{eq:79}
\end{equation}
By Theorem~\ref{th:LD}, for the complementary events $\overline{\Upsilon}_n(r)$ one has
\begin{equation}
\lim_{n\to\infty}(nu_n^2)^{-1}\log P(\overline{\Upsilon}_n(r))=-r/2.
\label{eq:80}
\end{equation}
Consequently from~\eqref{eq:78}--\eqref{eq:80} the upper bound in~\eqref{eq:18} follows.

Note that if the event $D_n$ holds, then the events $|\psi_n|<n^{1/2}u_n(1-\varepsilon)/2$ ($0<\varepsilon<1$) and $|I^{1/2}(\theta_0)(\hat\theta_n-\theta_0)|>u_n$ cannot hold simultaneously. They would simultaneously imply that $Z_n(\hat\theta_n)<1$ and $Z_n(0)=1$, which is a contradiction.

Therefore in what follows we shall assume that the event
\begin{equation}
G_n=\{X_1,\dots,X_n:|\psi_n|>n^{1/2}u_n/2\}
\label{eq:81}
\end{equation}
holds. Take $\kappa$ with $0<\kappa<r$ and define the events
\begin{equation}
W_n(\kappa)=D_n\cap\{X_1,\dots,X_n:|\psi_n|<(r-\kappa)n^{1/2}u_n\}.
\label{eq:82}
\end{equation}
Observe that
\begin{align}
\max_{u^TI(\theta_0)u<(r-\kappa)u_n^2}\zeta_n(u)
&=-\tfrac12\min_{u^TI(\theta_0)u<(r-\kappa)u_n^2}|n^{-1/2}\psi_n-\sqrt{n}\,u^TI^{1/2}(\theta_0)|^2\notag\\
&\qquad+\tfrac12 n\psi_n^T\psi_n=\tfrac12 n\psi_n^T\psi_n.
\label{eq:83}
\end{align}
Assume that the case~\eqref{eq:66} holds. Then on the set of events $W_n(\kappa)$, for a point $u_{in}$ with $|u_{in}-\hat\theta_n|<\delta_n$, one has
\begin{align}
\bigl|\log Z_n^{1/2}(\hat\theta_n)-\log Z_n^{1/2}(u_{in})\bigr|
&\le\bigl|\log\bigl(1+Z_n^{-1/2}(u_{in})(Z_n^{1/2}(\hat\theta_n)-Z_n^{1/2}(u_{in}))\bigr)\bigr|\notag\\
&\le C Z_n^{-1/2}(u_{in})\bigl(Z_n^{1/2}(\hat\theta_n)-Z_n^{1/2}(u_{in})\bigr)\notag\\
&\le C\exp\{-cnu_n^2/\omega_3(u_n)\},
\label{eq:84}
\end{align}
where the second inequality follows from the fact that $Z_n^{-1/2}(u_{in})(Z_n^{1/2}(\hat\theta_n)-Z_n^{1/2}(u_{in}))$ is positive (since $\hat\theta_n$ is a maximum-likelihood estimator) and the third inequality follows from the occurrence of the event $D_n$.

Consider the case~\eqref{eq:67}. Estimating analogously to~\eqref{eq:84} and applying the same arguments we obtain
\begin{align}
\bigl|\log Z_n^{1/2}(\hat\theta_n)-\log Z_n^{1/2}(u_{in})\bigr|
&\le C Z_n^{-1/2}(u_{in})\bigl(Z_n^{1/2}(\hat\theta_n)-Z_n^{1/2}(u_{in})\bigr)\notag\\
&\le C\exp\bigl\{-cnu_n^2/\omega_3(u_n)\,(2-r)/2\bigr\}.
\label{eq:85}
\end{align}
From~\eqref{eq:84} and~\eqref{eq:85} it follows that on the set of events $D_n$, for a point $u_{in}$ with $|u_{in}-\hat\theta_n|<\delta_n$, one has
\begin{equation}
\bigl|\log Z_n^{1/2}(\hat\theta_n)-\log Z_n^{1/2}(u_{in})\bigr|\le o(nu_n^2).
\label{eq:86}
\end{equation}
Therefore on the set of events $D_n\cap W_n(\kappa)$ one has
\begin{equation}
\max_{u^TI(\theta_0)u<(r-\kappa)u_n^2}\log Z_n(u)=\tfrac12 n\psi_n^T\psi_n\,(1+o(1)),
\label{eq:87}
\end{equation}
and the maximum is attained in the region $\{u:u^TI(\theta_0)u<r^2 u_n^2\}$, which completes the proof of~\eqref{eq:18}.

Let $e\in\mathbb{R}^d$ with $|e|=1$. Denote by $B_\delta$ the ball of radius $\delta$, $0<\delta<0.05$, centred at the origin.

To prove~\eqref{eq:21}--\eqref{eq:23} it is sufficient to show that
\begin{equation}
\lim_{n\to\infty}(nu_n^2)^{-1}\log P_\theta\bigl(I^{1/2}(\theta_0)\psi_n\in u_nn^{1/2}(e+B_\delta),\ \hat\theta_n-\theta_0\notin u_n(e+B_{4\sqrt{\delta}})\bigr)=-\infty.
\label{eq:88}
\end{equation}
For notational simplicity we carry out the argument for the proof of~\eqref{eq:88} when the matrix $I(\theta_0)$ is the identity. It is enough to prove~\eqref{eq:88} under the event $D_n$. Then, if $\psi_n\in n^{1/2}u_n(e+B_\delta)$, the smallest possible value of $\max\{\zeta_n(u):u\in\mathbb{R}^d\}$ equals $(1-\delta)^2 nu_n^2/2$. At the same time the largest possible value of $\max\{\zeta_n(u):u\notin u_n(e+B_{4\sqrt{\delta}})\}$ equals $nu_n^2\bigl((1+\delta)^2-8\delta\bigr)/2$. Consequently~\eqref{eq:88} holds.

\section{Proof of Theorem~\ref{th:Bayes}}

Since under the conditions of Theorem~\ref{th:Bayes} the Bayesian estimator is asymptotically normal (see Theorem~3.1 of Chapter~3 in~\cite{10}), it is $u_n$-consistent. Consequently the lower bound in~\eqref{eq:33} follows from the lower bound for asymptotic efficiency in the Bahadur sense in the zone of moderate deviation probabilities (see Theorem~2.2 of~\cite{5}).

The proof of the upper bound in~\eqref{eq:33} will be based on the same approach as the proof of Theorem~\ref{th:MLE}. We define an event $D_n$ such that $(nu_n^2)^{-1}\log P_{\theta_0}(\overline{D}_n)\to-\infty$ as $n\to\infty$, and show that for every $\delta>0$
\begin{equation}
P_{\theta_0}\bigl(|I^{1/2}(\theta_0)(\tilde\theta_n-\theta_0)|>u_n,\ |\psi_n|<n^{1/2}(1-\delta)u_n,\ D_n\bigr)=0.
\label{eq:89}
\end{equation}
Then the upper bound will follow from Theorem~\ref{th:LD}.

For $r>0$ denote
\begin{equation}
I(r)=\int_{|u|>ru_n}Z_n(u)\pi(u)\,du,
\label{eq:90}
\end{equation}
\begin{equation}
J(r)=\int_{|u|>ru_n}|u|^a Z_n(u)\pi(u)\,du,\qquad a>0,
\label{eq:91}
\end{equation}
and
\begin{equation}
Q(r)=I(r)\Biggl(\int Z_n(u)\pi(u)\,du\Biggr)^{-1}.
\label{eq:92}
\end{equation}
From Lemma~5.2 of Chapter~1 and Theorem~5.5 of Chapter~1 in~\cite{10} it follows that for any $a$ one can choose sequences $\omega_1(u_n)\to0$, $\omega_2(u_n)\to0$ (satisfying~\eqref{eq:13}) and a sequence $r_n$ with $r_nu_n\to0$, $r_n\to\infty$ as $n\to\infty$ such that for $n>n_0$
\begin{align}
P\bigl(I(r_n)>\exp\{-nu_n^2/\omega_1(u_n)\}\bigr)&\le\exp\{-nu_n^2/\omega_2(u_n)\},
\label{eq:93}\\
P\bigl(J(r_n)>\exp\{-nu_n^2/\omega_1(u_n)\}\bigr)&\le\exp\{-nu_n^2/\omega_2(u_n)\},
\label{eq:94}\\
P\bigl(Q(r_n)>\exp\{-nu_n^2/\omega_1(u_n)\}\bigr)&\le\exp\{-nu_n^2/\omega_2(u_n)\}.
\label{eq:95}
\end{align}
Moreover there exists $\omega_3(u_n)\to0$ as $n\to\infty$ such that
\begin{equation}
P(\tilde\theta_n-\theta_0>r_nu_n/2)\le\exp\{-nu_n^2/\omega_3(u_n)\}.
\label{eq:96}
\end{equation}
This allows us, in the subsequent estimates, to assume that the complementary events to those appearing on the left-hand sides of~\eqref{eq:93}--\eqref{eq:96} hold, and to carry out the further arguments for the Bayesian estimator $\tilde\theta_n$ defined not by equation~\eqref{eq:Bayes} but by
\begin{equation}
\tilde\theta_n=\arg\min_{t\in\Theta}\frac{\displaystyle\int_{|\theta-\theta_0|<r_nu_n}l(t-\theta)\prod_{j=1}^n f(X_j,\theta)\pi(\theta)\,d\theta}{\displaystyle\int_{|\theta-\theta_0|<r_nu_n}\prod_{j=1}^n f(X_j,\theta)\pi(\theta)\,d\theta},
\label{eq:97}
\end{equation}
and, in addition, to assume that the minimizer $\tilde\theta_n$ lies in the region $|\tilde\theta-\theta_0|<r_nu_n/2$.

We carry out the subsequent arguments for the case
\begin{equation}
u_n\sqrt{\frac n{\log n}}\to\infty\qquad(n\to\infty)
\label{eq:98}
\end{equation}
and afterwards indicate the differences for the case
\begin{equation}
u_n\sqrt{\frac n{\log n}}<c,
\label{eq:99}
\end{equation}
where $c$ is an arbitrary positive constant.

Instead of estimates of the modulus of continuity and the application of Theorem~19 of Appendix~1 in~\cite{10}, our estimates will be based on the following auxiliary inequality:
\begin{align}
\mathbb{E}|Z_n(u)-Z_n(v)|
&=\mathbb{E}\bigl|(Z_n^{1/2}(u)-Z_n^{1/2}(v))(Z_n^{1/2}(u)+Z_n^{1/2}(v))\bigr|\notag\\
&\le\bigl(\mathbb{E}(Z_n^{1/2}(u)-Z_n^{1/2}(v))^2\bigr)^{1/2}\bigl(\mathbb{E}(Z_n^{1/2}(u)+Z_n^{1/2}(v))^2\bigr)^{1/2}\notag\\
&\le2\bigl(\mathbb{E}(Z_n^{1/2}(u)-Z_n^{1/2}(v))^2\bigr)^{1/2}\le Cn|u-v|.
\label{eq:100}
\end{align}
Here in the last inequality we used Lemma~5.5 of Chapter~1 in~\cite{10}.

Take a sequence $r_n>0$ with $r_n\to\infty$, $r_nu_n\to0$ as $n\to\infty$. On the set $K_n=(-r_nu_n,r_nu_n)^d$ construct a $\delta_n$-net consisting of the points $u_{ni}$, $1\le i\le l_n$. The choice of the sequence $\delta_n>0$ will be different for the cases~\eqref{eq:98} and~\eqref{eq:99}. Partition the set $K_n$ into measurable pairwise disjoint subsets $\Delta_{ni}$, $u_{ni}\in\Delta_{ni}$, $1\le i\le k_n$, of diameter $\sup_{u,v\in\Delta_{ni}}|u-v|<3\delta_n$.

Define the event $A_n$ by~\eqref{eq:68}. Denote
\begin{equation}
L_n(t)=\int_{K_n}Z_n(u)\,du-\sum_{i=1}^{k_n}\int_{\Delta_{ni}}Z_n(u_{ni})\,du.
\label{eq:101}
\end{equation}
Using~\eqref{eq:100} we obtain
\begin{equation}
J_n=\mathbb{E}|L_n|\le\sum_{i=1}^{k_n}\int_{\Delta_{ni}}\mathbb{E}|Z_n(u)-Z_n(u_{ni})|\,du\le Cn\delta_n(r_nu_n)^d.
\label{eq:102}
\end{equation}
Note that in~\eqref{eq:102} the integration over the cube $K_n$ may be replaced by integration over any union of the sets $\Delta_{ni}$.

Without loss of generality we assume in the arguments that
\[
\inf\bigl\{\|u\|:u^TI(\theta_0)u=1,\ u\in\mathbb{R}^d\bigr\}=1.
\]
Denote $C_0=\sup\bigl\{\|u\|:|u^TI(\theta_0)u|=1,\ u\in\mathbb{R}^d\bigr\}$.

We begin with the case~\eqref{eq:98}. Let $\|t\|>u_n$. For $\|u\|<u_n/2$ we have $\|t-u\|\ge\|t\|-\|u\|>\|u\|$. Consequently, for $0<r_1<1/2$,
\begin{equation}
\int_{\|u\|<r_1u_n}l(t-u)Z_n(u)\,du\ge\int_{\|u\|<r_1u_n}l(u)Z_n(u)\,du.
\label{eq:103}
\end{equation}
For $0<\delta_0<0.1C_0^{-1}$ consider the case when the event
\begin{equation}
\Upsilon_n=\{X_1,\dots,X_n:|\psi_n|<\delta_0 n^{1/2}u_n\}
\label{eq:104}
\end{equation}
holds. Take $r_1,r_2$ such that $0<r_1<C_0r_1<r_2<C_0r_2<1/2$. Then
\[
U_n=\{u:C_0r_1u_n<|uI^{1/2}(\theta_0)|<r_2u_n\}\subset V_n=\{u:r_1u_n\le\|u\|\le r_2u_n\}.
\]
Consequently, by~\eqref{eq:30},
\begin{equation}
\int_{V_n}\bigl(l(t-u)-l(u)\bigr)Z_n(u)\,du\ge C l_1(u_n)\int_{U_n}Z_n(u)\,du.
\label{eq:105}
\end{equation}
Take a sequence $\omega_3(u_n)$ such that $\omega_3(u_n)\to0$ and $\omega_3(u_n)/\omega_2(u_n)\to\infty$ as $n\to\infty$. Put $\delta_n=\exp\{-nu_n^2/\omega_3(u_n)\}$. Denote by $M_n$ the set of points $u_{ni}$, $1\le i\le l_n$, such that $u_{ni}\in U_n$. Denote $W_{1n}=\bigcup_{u_{ni}\in M_n}\Delta_{ni}$.

Put
\begin{equation}
L_{1n}=L_n(r_1,r_2)=\int_{W_{1n}}Z_n(u)\,du-\sum_{u_{ni}\in M_n}\int_{\Delta_{ni}}l(t-u)Z_n(u_{ni})\,du.
\label{eq:106}
\end{equation}
Estimating analogously to~\eqref{eq:102} we obtain
\begin{equation}
\mathbb{E}|L_{1n}|\le Cn\delta_nu_n^d.
\label{eq:107}
\end{equation}
Define the event
\[
D_{1n}=D_n(r_1,r_2)=\bigl\{X_1,\dots,X_n:|L_{1n}|<\exp\{-nu_n^2/(3\omega_3(u_n))\}\bigr\}.
\]
Then, using~\eqref{eq:107},
\begin{equation}
P(\overline{D}_{1n})<\exp\{nu_n^2/(3\omega_3(u_n))\}\mathbb{E}|L_{1n}|\le C\exp\{-2nu_n^2/(3\omega_3(u_n))\}.
\label{eq:108}
\end{equation}
We continue the argument under the assumption that the event $A_n\cap D_{1n}$ holds.

Since the event $A_n$ holds,
\begin{equation}
Z_n(u_{ni})\exp\{-\zeta(u_{ni})\}\ge\exp\{-nu_{ni}^2\omega_1(u_{ni})\}.
\label{eq:109}
\end{equation}
Consequently
\begin{equation}
Z_n(u_{ni})\ge\exp\{-2n|u_{ni}|\delta_0 u_n-nu_{ni}^TI(\theta_0)u_{ni}/2-nu_n^2\omega_1(u_n)\}.
\label{eq:110}
\end{equation}
Hence
\begin{equation}
\sum_{u_{ni}\in M_n}\int_{\Delta_{ni}}Z_n(u_{ni})\,du\ge\exp\bigl\{-n(r_2^2+4\delta_0)u_n^2/2-nu_n^2\omega_1(u_n)(1+o(1))\bigr\}.
\label{eq:111}
\end{equation}
Therefore
\begin{align}
\int_{V_{1n}}\bigl(l(t-u)-l(u)\bigr)Z_n(u)\,du
&\ge l_1(u_n)n^{-d/2}\exp\bigl\{-n(r_2^2+4\delta_0)u_n^2/2-nu_n^2\omega_1(u_n)(1+o(1))\bigr\}.
\label{eq:112}
\end{align}
We have
\begin{equation}
\int_{r_2u_n\le\|u\|<u_n/2}\bigl(l(t-u)-l(u)\bigr)Z_n(u)\,du>0.
\label{eq:113}
\end{equation}
We estimate from above the integral
\begin{equation}
\int_{u_n/2\le\|u\|<r_nu_n}\bigl(l(t-u)-l(u)\bigr)Z_n(u)\,du.
\label{eq:114}
\end{equation}
Denote by $M_{2n}$ a $\delta_n$-net consisting of the points $u_{ni}$, $1\le i\le l_{1n}$, such that $0.5u_n\le\|u_{ni}\|\le r_nu_n$. Define the event $D_{2n}=D_n(0.5,r_n)$.

Analogously to~\eqref{eq:108} we have
\begin{equation}
P(\overline{D}_{2n})\le C\exp\{-2nu_n^2/(3\omega_1(u_n))(1+o(1))\}.
\label{eq:115}
\end{equation}
If the event $A_n$ holds, then for the points $u_{ni}$, $1\le i\le l_{1n}$,
\begin{equation}
Z_n(u_{ni})\exp\{-\zeta_n(u_{ni})\}\le\exp\{n|u_{ni}|^2\omega_1(|u_{ni}|)\}.
\label{eq:116}
\end{equation}
Consequently
\begin{equation}
Z_n(u_{ni})\le\exp\{2n|u_{ni}|\delta_0 u_n-nu_{ni}^TI(\theta_0)u_{ni}/2+nu_n^2\omega(u_n)\}.
\label{eq:117}
\end{equation}
Hence
\begin{equation}
\sum_{u_{ni}\in M_{2n}}\int_{\Delta_{ni}}Z_n(u_{ni})\,du\le\exp\bigl\{-n(1/4-4\delta_0)u_n^2/2+nu_n^2\omega(u_n)(1+o(1))\bigr\}.
\label{eq:118}
\end{equation}
Therefore, if the event $A_n\cap D_{2n}$ holds,
\begin{align}
&\int_{u_n/2\le\|u\|<r_nu_n}\bigl(l(t-u)-l(u)\bigr)Z_n(u)\,du\notag\\
&\le C l_1(r_nu_n)\int_{u_n/2\le\|u\|<r_nu_n}Z_n(u)\,du\notag\\
&\le C l_1(r_nu_n)n^{-d/2}\exp\bigl\{-n(1/8-2\delta_0)u_n^2+nu_n^2\omega_1(u_n)(1+o(1))\bigr\}.
\label{eq:119}
\end{align}
Since the right-hand side of~\eqref{eq:112} is larger than the right-hand side of~\eqref{eq:119}, from~\eqref{eq:103},~\eqref{eq:112},~\eqref{eq:113} and~\eqref{eq:119} relation~\eqref{eq:89} follows, provided the event $\Upsilon_n\cap A_n\cap D_{1n}\cap D_{2n}$ holds.

Take $\delta>0$ with $\delta<\delta_0/10$. Consider the case $\delta_0 n^{1/2}u_n<|\psi_n|<(1-5\delta C_0)n^{1/2}u_n$.

Denote by $B(v,C)=\{x:|I^{1/2}(\theta_0)(x-v)|<C,\ x\in\mathbb{R}^d\}$ the ball centred at $v\in\mathbb{R}^d$ of radius $C$.

Let the points $x_{n1},\dots,x_{nk}$ form a $\delta u_n$-net in the set $B(0,(1-5\delta)u_n)\setminus B(0,\delta_0 u_n)$. Denote by $J_{ni}$ the set of all indices $j$, $1\le j\le l_n$, such that $\Delta_{nj}\subset B(x_{ni},2\delta u_n)$. Denote $\overline{J}_{ni}=\{j:j\notin J_{ni},\ 1\le j\le k_n\}$.

Put $U_{ni}=\bigcap_{j\in J_{ni}}\Delta_{nj}$. Denote $V_{ni}=K_n\setminus U_{ni}$.

For $1\le i\le k$ define
\begin{align}
L_{ni}&=\int_{U_{ni}}Z_n(u)\,du-\sum_{j\in J_{ni}}\int_{\Delta_{nj}}Z_n(u_{ni})\,du,
\label{eq:120}\\
\overline{L}_{ni}&=\int_{V_{ni}}Z_n(u)\,du-\sum_{j\in\overline{J}_{ni}}\int_{\Delta_{nj}}Z_n(u_{ni})\,du.
\label{eq:121}
\end{align}
Define the events
\begin{align}
D_{ni}&=\bigl\{X_1,\dots,X_n:|L_{ni}|<\exp\{-nu_n^2/(3\omega_3(u_n))\}\bigr\},
\label{eq:122}\\
G_{ni}&=\bigl\{X_1,\dots,X_n:|\overline{L}_{ni}|<\exp\{-nu_n^2/(3\omega_3(u_n))\}\bigr\}.
\label{eq:123}
\end{align}
Estimating analogously to~\eqref{eq:102} and~\eqref{eq:108} we obtain
\begin{equation}
P(\overline{D}_{ni})<C\exp\{-2nu_n^2/(3\omega_3(u_n))\},\qquad
P(\overline{G}_{ni})<C\exp\{-2nu_n^2/(3\omega_3(u_n))\}
\label{eq:124}
\end{equation}
for $1\le i\le k$.

Denote $D_{3n}=\bigcap_{i=1}^k(D_{ni}\cap G_{ni})$.

Let $\psi_n\in U_{ni}$. We show that then, if the event $A_n\cup D_{3n}$ holds,
\begin{equation}
J_n=\inf_{|t|>u_n}\int l(t-u)Z_n(u)\,du-\int l(x_{ni}-u)Z_n(u)\,du>0,
\label{eq:125}
\end{equation}
which completes the proof.

Since $|I^{1/2}(\theta_0)(t-x_{ni})|>5C_0\delta u_n$ and $\|x_{ni}-u\|<|I^{1/2}(\theta_0)(x_{ni})-u|<2\delta u_n$ for $u\in U_{ni}$, we have
\begin{align}
J_n
&\ge l_1(3\delta u_n)\int_{U_{ni}}Z_n(u)\,du+\inf_{|t|>u_n}\int_{V_{ni}}l(t-u)Z_n(u)\,du\notag\\
&\qquad-l_1(2\delta u_n)\int_{U_{ni}}Z_n(u)\,du-\int_{V_{ni}}l(x_{ni}-u)Z_n(u)\,du\notag\\
&\ge\bigl(l_1(3\delta u_n)-l_1(2\delta u_n)\bigr)\int_{U_{ni}}Z_n(u)\,du-\int_{V_{ni}}l(x_{ni}-u)Z_n(u)\,du\notag\\
&\ge\bigl(l_1(3\delta u_n)-l_1(2\delta u_n)\bigr)\int_{U_{ni}}Z_n(u)\,du-l_1(u_n)\int_{V_{ni}}Z_n(u)\,du.
\label{eq:126}
\end{align}
Using~\eqref{eq:124} and the occurrence of the event $A_n$ we obtain
\begin{equation}
\int_{U_{ni}}Z_n(u)\,du=n^{-d/2}\exp\{|\psi_n|^2/2+o(nu_n^2)\}
\label{eq:127}
\end{equation}
and, since $|u-\psi_n|\ge\delta u_n$ for $u\in V_{ni}$,
\begin{equation}
\int_{V_{ni}}Z_n(u)\,du=o\bigl(n^{-d/2}\exp\{|\psi_n|^2/2-\delta^2 nu_n^2(1+o(1))/2\}\bigr).
\label{eq:128}
\end{equation}
From~\eqref{eq:126}--\eqref{eq:128} relation~\eqref{eq:125} follows.

Thus~\eqref{eq:89} holds if we take $D_n=A_n\cap D_{1n}\cap D_{2n}\cap D_{3n}$.

Consider the case~\eqref{eq:99}. For $\theta\in\Theta$ denote $\lambda=3(b^2+1+d/2)$. Put $\delta_n\asymp n^{-\lambda}$.

With this choice of $\delta_n$, if we prove the analogues of inequalities~\eqref{eq:108},~\eqref{eq:115} and~\eqref{eq:124}, then all the estimates~\eqref{eq:112},~\eqref{eq:119} and~\eqref{eq:126}--\eqref{eq:128} remain valid.

Define the events
\begin{equation}
D_{21n}=D_{2n}(r_1,r_2)=\bigl\{X_1,\dots,X_n:|L_{1n}|>n^{-\lambda/3}\bigr\}
\label{eq:129}
\end{equation}
and put $D_{2n}=D_n(0.5,r_n)$.

For $1\le i\le k$ define the events
\begin{align}
D_{ni}&=\bigl\{X_1,\dots,X_n:|L_{ni}|>n^{-\lambda/3}\bigr\},
\label{eq:130}\\
G_{ni}&=\bigl\{X_1,\dots,X_n:|\overline{L}_{ni}|>n^{-\lambda/3}\bigr\}.
\label{eq:131}
\end{align}
Arguing as in~\eqref{eq:102} we obtain
\begin{equation}
\mathbb{E} L_{1n}<Cnn^{-\lambda}(r_nu_n)^d
\label{eq:132}
\end{equation}
and for $1\le i\le k$
\begin{equation}
\mathbb{E} L_{ni}<Cnn^{-\lambda}u_n^d,\qquad
\mathbb{E}\overline{L}_{ni}<Cnn^{-\lambda}(r_nu_n)^d.
\label{eq:133}
\end{equation}
Hence, estimating as in~\eqref{eq:108},
\begin{equation}
P(D_{21n})<Cn^{1-2\lambda/3}(r_nu_n)^d,\qquad
P(D_{22n})<Cn^{1-2\lambda/3}(r_nu_n)^d
\label{eq:134}
\end{equation}
and for $1\le i\le k$
\begin{equation}
P(D_{2ni})<Cn^{1-2\lambda/3}u_n^d,\qquad
P(G_{2ni})<Cn^{1-2\lambda/3}u_n^d.
\label{eq:135}
\end{equation}
Since these asymptotics are of substantially smaller order than the asymptotics
\begin{equation}
P_{\theta_0}\Biggl(I^{-1/2}(\theta_0)\sum_{i=1}^n\phi_\varepsilon(X_i)>nu_n\Biggr),
\label{eq:136}
\end{equation}
the upper bound in~\eqref{eq:33} follows.

We prove~\eqref{eq:34}--\eqref{eq:36} for $b=0$. In~\eqref{eq:104} we may always take $\delta_0<\delta$. Then, by~\eqref{eq:103},~\eqref{eq:112},~\eqref{eq:113} and~\eqref{eq:119},
\begin{equation}
\lim_{n\to\infty}(nu_n^2/2)^{-1}\log P_{\theta_0}\bigl(\bigl|(\tilde\theta_n-\theta)-2n^{-1/2}\psi_n\bigr|>\delta u_n,\ \Upsilon_n\bigr)=-\infty,
\label{eq:137}
\end{equation}
\begin{equation}
\lim_{n\to\infty}(nu_n^2/2)^{-1}\log P_{\theta_0}\Biggl(\Biggl|n^{-1}\sum_{i=1}^n\xi_n(X_i,\tilde\theta_n)-\psi_n^T\psi_n/2\Biggr|>\delta nu_n^2,\ \Upsilon_n\Biggr)=-\infty
\label{eq:138}
\end{equation}
and
\begin{equation}
\lim_{n\to\infty}(nu_n^2/2)^{-1}\log P_{\theta_0}\Biggl(\Biggl|\sum_{i=1}^n\xi_n(X_i,\tilde\theta_n)
-n(\tilde\theta_n-\theta_0)^TI(\theta)(\tilde\theta_n-\theta_0)/2\Biggr|>\delta nu_n^2,\ \Upsilon_n\Biggr)=-\infty.
\label{eq:139}
\end{equation}
If the complementary event $\overline{\Upsilon}_n$ holds, inequalities analogous to~\eqref{eq:137}--\eqref{eq:139} follow from the estimates~\eqref{eq:126}--\eqref{eq:128}, which completes the proof of~\eqref{eq:34}--\eqref{eq:36}.

From the estimates~\eqref{eq:103},~\eqref{eq:112},~\eqref{eq:113},~\eqref{eq:119} and~\eqref{eq:126}--\eqref{eq:128} we obtain that there exists $c>0$ such that
\begin{equation}
\lim_{n\to\infty}(nu_n^2)^{-1}\log P_\theta\Biggl(\int_{|u|<r_nu_n}l_1(\|\tilde\theta_n-u\|)Z_n(u)\,du\le\exp\{-cnu_n^2\}\Biggr)=-\infty.
\label{eq:140}
\end{equation}
Consequently, in the proof of the theorem we may neglect the contribution of the terms $I(r_n)$, $J(r_n)$, $Q(r_n)$ when performing the estimates.

\subsection{Proof of Theorem~\ref{th:posterior}}

The arguments are largely based on the estimates appearing in the proof of Theorem~\ref{th:Bayes}. We shall adhere to the notation of the proof of Theorem~\ref{th:Bayes}.

We begin with the proof of the lower bound in~\eqref{eq:42}. Denote $r=\inf\{|\tau|:\tau\in\Omega\}$. For any $\varepsilon>0$ there exist a point $e\in\Omega$ and $\delta>0$ such that $|e|-r<\varepsilon$ and $B(e,2\delta)\subset\Omega$.

Define the events
\begin{equation}
\Psi_{ne}=\{X_1,\dots,X_n:\psi_n\in n^{1/2}u_n B(e,\delta)\}.
\label{eq:141}
\end{equation}
Then by Theorem~\ref{th:LD}
\begin{equation}
\lim_{n\to\infty}(nu_n^2/2)^{-1}\log P(\Psi_{ne})=-\inf_{x\in B(e,\delta)}|x|^2.
\label{eq:142}
\end{equation}
Define the event
\begin{equation}
G_n=\Biggl\{X_1,\dots,X_n:(\psi_n^T\psi_n/2)^{-1}\log\int_{\overline{B}(une,5\delta u_n)}Z_n(u)\pi(u)\,du>1-\delta\Biggr\}.
\end{equation}
Then, estimating analogously to~\eqref{eq:124},~\eqref{eq:127} and~\eqref{eq:128}, we obtain
\begin{equation}
\lim_{n\to\infty}(nu_n^2/2)^{-1}\log P(\Psi_{ne}\cap G_n)=-\infty.
\label{eq:143}
\end{equation}
For $0<\varepsilon<1$ define the event
\begin{equation}
G_{1n\varepsilon}=\Biggl\{X_1,\dots,X_n:\Bigl|(\psi_n^T\psi_n/2)^{-1}\log\int_{B(une,\delta u_n)}Z_n(u)\pi(u)\,du-1\Bigr|<\varepsilon\Biggr\}.
\end{equation}
Then, estimating analogously to~\eqref{eq:124},~\eqref{eq:127} and~\eqref{eq:128}, we obtain that for every $\varepsilon>0$
\begin{equation}
\lim_{n\to\infty}(nu_n^2/2)^{-1}\log P(\Psi_{ne}\cap G_{1n\varepsilon})=-\infty.
\label{eq:144}
\end{equation}
From~\eqref{eq:142}--\eqref{eq:144} the lower bound in Theorem~\ref{th:posterior} follows.

Applying the methods of the proof of Theorem~\ref{th:Bayes}, namely the inequalities~\eqref{eq:108},~\eqref{eq:111},~\eqref{eq:115},~\eqref{eq:118} when the event $\Upsilon_n$ holds, and the equalities~\eqref{eq:127},~\eqref{eq:128} when the event $\overline{\Upsilon}_n$ holds, we obtain that for every $C>0$ and for the set
\begin{equation}
G_{2n}=\{X_1,\dots,X_n:\psi_n<(r-\delta)n^{1/2}u_n\}
\label{eq:145}
\end{equation}
one has
\begin{equation}
\lim_{n\to\infty}(nu_n^2/2)^{-1}\log P\Biggl(\frac{\displaystyle\int_{|u|<ru_n}Z_n(u)\pi(u)\,du}{\displaystyle\int_{|u|>ru_n}Z_n(u)\pi(u)\,du}<C,\ G_{2n}\Biggr)=-\infty,
\label{eq:146}
\end{equation}
which, together with Theorem~\ref{th:LD}, yields the upper bound in Theorem~\ref{th:posterior}.

\end{document}